\documentclass[11pt,reqno]{amsart}
\usepackage{enumerate}
\usepackage{a4wide}
\usepackage{amsmath}
\usepackage{amsfonts}
\usepackage{amssymb}
\usepackage{latexsym}
\usepackage{mathrsfs}
\usepackage{amsthm}
\usepackage[active]{srcltx}
\usepackage{tabularx}
\usepackage{graphicx}
\usepackage{bbm}
\usepackage{setspace}
\usepackage{hyperref}
\usepackage{geometry}
\usepackage[usenames,dvipsnames]{color}
\everymath{\displaystyle}
\usepackage{url}
\usepackage{hyperref}


\newcommand{\Z}{\mathbb Z}
\newcommand{\Q}{\mathbb Q}
\newcommand{\A}{\mathbb A}
\newcommand{\Aa}{\mathcal A}

\newcommand{\F}{\mathbb F}

\newcommand{\C}{\mathbb C}

\newcommand{\On}{(\Om/\id{p}^n)^\times}
\DeclareMathOperator{\GL}{GL}
\DeclareMathOperator{\val}{val}

\DeclareMathOperator{\Disc}{Disc}
\DeclareMathOperator{\Norm}{Norm}
\DeclareMathOperator{\GSp}{GSp}
\DeclareMathOperator{\SL}{SL}

\DeclareMathOperator{\Gal}{Gal}
\DeclareMathOperator{\Ind}{Ind}
\DeclareMathOperator{\PGL}{PGL}

\DeclareMathOperator{\Aut}{Aut}

\DeclareMathOperator{\Trace}{Tr}

\DeclareMathOperator{\cond}{cond}
\DeclareMathOperator{\sw}{sw}

\def\id#1{{\mathfrak{#1}}}      

\DeclareMathOperator{\NCM}{\bf NCM}

\DeclareMathOperator{\LO}{{\bf LO}}

\DeclareMathOperator{\LT}{{\bf LT}}

\DeclareMathOperator{\St}{St}
\DeclareMathOperator{\BC}{Ind_{W(E)}^{W(\Q_p)}(\theta)}
\def\Om{\mathscr{O}}

\theoremstyle{plain}
\newtheorem{thm}{Theorem}
\newtheorem{lemma}[thm]{Lemma}

\newtheorem{prop}[thm]{Proposition}
\newtheorem{question}[thm]{Question}
\theoremstyle{definition}
\newtheorem{dfn}[thm]{Definition}
\theoremstyle{remark}
\newtheorem{remark}[thm]{Remark}
\newtheorem{example}{Example}
\begin{document}

\title[On the number of Galois orbits of newforms]{On the number of Galois orbits of newforms}

\author{Luis Dieulefait}
\address{Facultat de Mathematiques, Universitat de Barcelona}
                     \email{ldieulefait@ub.edu}

\author{Ariel Pacetti}
\address{FaMAF-CIEM, Universidad Nacional de C\'ordoba. C.P:5000,  C\'ordoba, Argentina.}
\email{apacetti@famaf.unc.edu.ar}
\thanks{AP was partially supported by PIP 2014-2016 11220130100073}

\author{Panagiotis Tsaknias}
\email{p.tsaknias@gmail.com}
\thanks{PT was partially funded by by the Luxembourg Research Fund
INTER/DFG/12/10/COMFGREP in the framework of the priority program 1489
of the Deutsche Forschungsgemeinschaft}

\keywords{Hecke Galois orbits}
\subjclass[2010]{Primary: 11F03, Secondary: 11F11}

\begin{abstract}

  Counting the number of Galois orbits of newforms in
  $S_k(\Gamma_0(N))$ and giving some arithmetic sense to this number
  is an interesting open problem. The case $N=1$ corresponds to
  Maeda's conjecture (still an open problem) and the expected number
  of orbits in this case is 1, for any $k \ge 16$. In this article we
  give local invariants of Galois orbits of newforms for general $N$
  and count their number. Using an existence result of newforms with
  prescribed local invariants we prove a lower bound for the number of
  non-CM Galois orbits of newforms for $\Gamma_0(N)$ for large enough
  weight $k$ (under some technical assumptions on $N$). Numerical
  evidence suggests that in most cases this lower bound is indeed an
  equality, thus we leave as a Question the possibility that a
  generalization of Maeda's conjecture could follow from our work.  We
  finish the paper with some natural generalizations of the problem
  and show some of the implications that a generalization of Maeda's
  conjecture has.
\end{abstract}

\maketitle

\tableofcontents

\section*{Introduction}

A conjecture of Maeda predicts that there is a unique Galois orbit of
level $1$ newforms for all weights $k \geq 16$. A natural problem is
to study what happens while working with modular forms of arbitrary
level $N$. For small weights, the number of Galois orbits in
$S_k(\Gamma_0(N)$ is hard to understand, for example in weight $2$
(which is not in the original Maeda's conjecture) there are many
elliptic curves of the same conductor $N$. However, while computing
spaces of modular forms of a fixed level and varying the weight $k$, the
situation changes completely. Surprisingly, the number of orbits tends
to stabilize very fast, and the numbers obtained follow some pattern
(see for example the data in \cite{Tsaknias2012a}).

While proving Maeda's conjecture of newforms for $\SL_2(\Z)$ is a very
hard problem, it is fairly easy to prove the lower bound $1$ for the
number of Galois orbits when $k \ge 16$, which corresponds to the ``easy''
inequality. The purpose of the present article is to present
invariants of Galois orbits of eigenforms, and use them to give a
lower bound for the number of Galois orbits of newforms in
$S_k(\Gamma_0(N))$ for $k$ large enough (i.e. for all $k \ge k_0$, for some $k_0\geq2$). In
many instances, the numerical data seems to indicate that such
inequality is in fact an equality.

The invariants introduced are of two different natures: a local one,
namely the Galois orbit of the \emph{local type} of the automorphic
representation at each prime dividing $N$; and a local-global one,
coming from the Atkin-Lehner eigenvalue at $p$ of the modular form
$f$. Recall that the local type can be thought of (via the
Local-Langlands correspondence) as the isomorphism class of the
restriction of the Weil-Deligne representation to the inertia subgroup
(see Section~\ref{sec:counting}). The Atkin-Lehner sign is more
subtle, and it is not clear how to obtain it from the Weil-Deligne
representation.

The lower bound we prove is of the following form. Let $\NCM(N,k)$
denote the number of Galois orbits of non-CM newforms of level $N$ and
weight $k$. If $N$ is a prime power or if $N$ is square-free, then
\begin{equation}
  \label{eq:mainformula}
\prod_{q \mid N} \LO(q^{\val_q(N)}) \le \NCM(N,k),  
\end{equation}
for all $k$ large enough, where the values of $\LO(q^r)$ are given in
Theorem~\ref{thm:valuesofLO}. Let us
explain a little bit all the ingredients of the formula and its proof. 

In Section~\ref{sec:counting}, we recall the theory of local types for
$\GL_2$, and consider Galois conjugacy classes of them. Since we want
to count the number of Galois orbits of modular forms, a naive idea is
that while conjugating a modular form $f$, one also conjugates the
local types, hence while identifying global conjugates one should do
the same locally. The section contains a detailed description of local
types and their number, the main result being a formula for the number
of Galois orbits of local types of level $p^n$ for any prime $p$ (the
case $p=2$ being the hardest one).

Section~\ref{sec:modformtypes} considers local types coming from
modular forms. There are two advantages on doing so: first we prove
(see Lemma~\ref{lemma:coeffppalseries} and
Lemma~\ref{lemma:coeffsupercuspidal}) that if a modular form $f$ has a
local type $\tilde{\tau}$, then its coefficient field is an extension
of $\Q$ with enough endomorphisms. In particular, this shows that the
naive approach (looking at local Galois orbits) is correct in most
instances. This is not true in general, but it is true under the hypothesis
on $N$ stated before, i.e. $N$ is a prime power or a square free
integer (see Remark~\ref{remark:localtypegeneralN} to understand the
general case). The second advantage of working with modular forms of trivial Nebentypus is
that we have the theory of Atkin-Lehner involutions. Clearly their
eigenvalues are constant on Galois orbits (see
Lemma~\ref{lemma:galoisinvariance}), thus they give an extra
invariant. There is an interesting phenomenon while computing
Atkin-Lehner eigenvalues: a modular form of level $p$ (prime) might
have any Atkin-Lehner eigenvalue (for different values of $p$ and
$k$ both are attained) but its twist by the quadratic character
unramified outside $p$ does not! Then we might have two different
Galois orbits of level $p$ (distinguished by the Atkin-Lehner
eigenvalue) whose twists (of level $p^2$) still give two different
orbits, but both of them having the same Atkin-Lehner eigenvalue. This
phenomenon suggests that we do not have to consider the Atkin-Lehner
sign as an invariant, but what we call the \emph{minimal Atkin-Lehner
  sign} (see Definition~\ref{defi:minimalAL}).

An important result in this direction is the determination of what are the
possible Atkin-Lehner signs for each local type. Such description is
given in Theorem~\ref{thm:AL}, which describes when the local type
determines the minimal Atkin-Lehner sign uniquely, and when it does
not. For the latter, we prove that the local sign varies while
twisting by the unramified quadratic character at $p$. Then we can
count the number of pairs $(\tilde{\tau},\epsilon)$ consisting of an
isomorphism class of local types of level $p^n$ and its compatible
minimal Atkin-Lehner sign. This number is denoted by $\LO(p^n)$ and is
the one appearing in ~(\ref{eq:mainformula}). An important result in
this section is a precise formula for such value (see
Theorem~\ref{thm:valuesofLO}).

Section~\ref{section:existence} considers the problem of the existence
of pairs $(\tilde{\tau},\epsilon)$ as before, for large values of
$k$. The main result is Theorem~\ref{thm:typesexistence}, in the case
$N$ a prime power or square-free. The proof is based on results from
Weinstein (\cite{Weinstein2009}) and Kim-Shin-Templier
\cite{Templier}. The latter article proves an existence result of
modular forms with a fixed local representation at $p$ (not being
principal series), not just its type!. Such a result is very strong,
but it implies Theorem~\ref{thm:typesexistence} under our
hypothesis. For general $N$, a different approach must be taken, as
principal series would need to be included (see
Remark~\ref{remark:localtypegeneralN}). We want to stress that if
Theorem~\ref{thm:typesexistence} holds for general $N$, then
(\ref{eq:mainformula}) holds in general (since the restriction on $N$
is only used in such result).

It is natural to ask why we discard the CM modular forms in our
result. The reason is twofold: first of all, modular forms with
complex multiplication do form an orbit on their own. The second one
is that (for $k$ large enough) when the space of newforms of a given
level $N$ contains a CM Galois orbit, there is another Galois orbit
with the same local type without complex multiplication. 
\begin{example}
Let $N=9$ and $k=16$. This space contains a unique modular form with
complex multiplication, whose $q$-expansion starts
$q - 32768 q^4 + 1244900 q^7 +O(q^{12})$. The local characters giving
the local representation can be computed with \cite{sage}; they
correspond to the character over the unramified quadratic extension
of $\Q_3$, sending a generator $s$ of $\F_9^\times$ to
$\sqrt{-1}$. There is another form, with $q$-expansion
$q + a q^2 + 87112 q^4 + 464 a q^5 - 2591260 q^7 + 54344 a q^8
+O(q^{10})$, where $a^2 = 119880$ whose characters (at inertia) are
exactly the same hence both representations have the same local
type. Note that the latter form does not have complex multiplication
(as the $5$-th coefficient is non-zero).  
\end{example}
This same situation holds in general and is part of
Theorem~\ref{thm:typesexistence}, whose proof uses the fact that the
number of non-CM forms with prescribed local types grows linearly on
the weight $k$, while the number of CM forms is constant. With all
these ingredients, the proof of the stated bound
(Theorem~\ref{cor:NCMlowbound}) is straightforward.

In \cite{Tsaknias2012a} the author proposed a generalization of
Maeda's conjecture (Conjecture 2.2) to arbitrary levels $N$ as
follows:
\begin{itemize}
\item the function $\NCM(N,k)$ is constant in the variable $k$ for $k$
  large enough
\item the limit function $\NCM(N):= \lim_{k \to \infty}\NCM(N,k)$ is
  multiplicative.
\item some values of $\NCM(p^n)$ were tabulated based on numerical experiments.
\end{itemize}

The present article started from the effort to prove that the
tabulated numbers have some meaning, and to express them as Galois orbits
invariants. While doing so, we realized that we do not expect the
function $\NCM(N)$ to be multiplicative (see
Remark~\ref{remark:localtypegeneralN}). The reason is that the
automorphisms of the coefficient field are not enough in general to
conjugate two different local types independently. Examples for this
involve huge levels which are nowadays unfeasible to compute with
nowadays resources (this was probably the reason why this phenomena
went unobserved).

We end the article with some possible generalizations of the present
ideas, and some applications. We propose a question
(Question~\ref{conecture:maeda}) which is in the spirit of Maeda's
original conjecture. Numerical evidence (gathered by the third named
author) suggests that in most of the considered cases this lower bound
is indeed an equality (for large enough weight $k$) to the number of
such Galois orbits, thus we leave as a Question the possibility that a
generalization of Maeda's conjecture could follow from our work; in
which case, for historical reasons, it should be called the
``Maeda-Tsaknias'' conjecture.  In Example~\ref{example:discrepancy} we
present a discrepancy between the experimental values of
$\NCM(256,12)$ and our lower bound which seems to persist for all
weights greater than $12$. We could not find any extra invariant that
justifies this discrepancy (it is an interesting problem to
investigate). In particular, if the value of $\NCM(256)$ is indeed
$12$, Question~\ref{conecture:maeda} needs to be reformulated taking
into account the missing invariants.

\subsection*{Acknowledgements} The authors would like to thank Kimball
Martin, Michael Harris and David Roberts for many useful
conversations. The third author would also like to thank Gabor Wiese
for many helpful conversations and remarks during the earlier stages
of this article.

\section{Inertial types for $\GL_2$}
\label{sec:counting}

Let $\Aa_p$ denote the set of isomorphism classes of complex-valued
irreducible admissible representations of $\GL_2(\Q_p)$. The local
Langlands correspondence gives a bijection between $\Aa_p$ and the
isomorphism classes of two-dimensional Frobenius-semisimple
Weil-Deligne representations of $\Q_p$, say
$\pi \leftrightarrow \tau(\pi)$. Furthermore, the equivalence
preserves $L$-functions and $\epsilon$-factors (see \cite{Kutzko} and
\cite{Bushnell06}). Via the local-Langlands correspondence, we will
move to-and-from $\Aa_p$ indistinctly.

\begin{dfn}
  A local inertial type of a Weil-Deligne representation $\tau$ is the
  isomorphism class of its restriction to the inertia subgroup. We
  denote it by $\tilde{\tau}$.  We say that a type is \emph{trivial}
  or \emph{unramified} if $\tilde{\tau}$ is the trivial
  representation.
\end{dfn}

\begin{remark}
  The inertial type can also be described in terms of the restriction $\pi|_{\GL_2(\Z_p)}$, as explained in \cite{Henniart}. See also\cite [Section 2.1]{Weinstein2009}.
\end{remark}

While working with local types, the maximal ideal is always clear from
the context. For this reason, and to ease notation, for the rest of
the article we will use the term \emph{conductor} (of a
representation, of a character, etc) to denote the exponent of the
conductor. We hope this will not create any confusion.

\begin{dfn}
  A global inertial type is a collection $(\tilde{\tau_p})_p$ with $p$ running over all
  prime numbers, where each $\tilde{\tau_p}$ is a local type at $p$
  and $\tilde{\tau_p}$ is trivial for all primes but finitely
  many.
\end{dfn}

\begin{thm}
  Any element $\pi$ of $\Aa_p$ is one of the following:

  \medskip

  \noindent $\bullet$ {\bf Principal series:} given characters
  $\chi_1,\chi_2:\Q_p^*\to\C^*$ such that
  $\chi_1\chi_2^{-1}\neq |\ |^{\pm1}$, the representation
  $\pi(\chi_1,\chi_2)$ is the induction of a $1$-dimensional
  representation of the Borel subgroup of $\GL_2(\Q_p)$, with action
  given by $\chi_1 \otimes \chi_2$.  The central character of
  $\pi(\chi_1,\chi_2)$ equals $\chi_1\chi_2$ and its conductor equals
  $\cond(\chi_1)+\cond(\chi_2)$.

  \smallskip

  \noindent $\bullet$ {\bf Special representations or Steinberg:} if
  $\chi_1 \chi_2^{-1} = | \ |^{\pm1}$, the representation
  $\pi(\chi_1,\chi_2)$ contains an irreducible codimension $1$
  subspace/quotient. Such representations are called \emph{Steinberg}
  and they are twists of a ``primitive'' (or standard) one denoted
  $\St$.
  The central character of $\St \otimes \chi$ equals $\chi^2$ and its
  conductor equals
\[
\cond(\St \otimes \chi)=
\begin{cases}
2\cond(\chi) & \text{ if }\chi \text{ is\ ramified,}\\
1 & \text{ otherwise.}
\end{cases}
\]
\smallskip

\noindent $\bullet$ {\bf Supercuspidal representations}: the remaining
ones, see \cite{KutzkoSCI,KutzkoSCII}. 
\label{thm:characterization}
\end{thm}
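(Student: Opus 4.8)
The plan is to reconstruct the classical structure theory of $\Aa_p$ (Jacquet--Langlands) organized around parabolic induction from the Borel subgroup $B\subset\GL_2(\Q_p)$. First I would set up, for each pair of characters $\chi_1,\chi_2\colon\Q_p^\times\to\C^\times$, the normalized induced representation $\pi(\chi_1,\chi_2)=\Ind_B^{\GL_2(\Q_p)}(\chi_1\otimes\chi_2)$, realized on locally constant functions $f$ with $f\bigl(\fm{a}{*}{0}{d}g\bigr)=\chi_1(a)\chi_2(d)\,|a/d|^{1/2}f(g)$. The central character is then read off immediately: $zI_2$ acts by $\chi_1(z)\chi_2(z)$, the $|a/d|^{1/2}$ factors cancelling on the center, which gives the claim for the principal series.

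The second step is the irreducibility analysis. I would study the standard intertwining operator $\pi(\chi_1,\chi_2)\to\pi(\chi_2,\chi_1)$ and compute the Jacquet module, showing that $\pi(\chi_1,\chi_2)$ is irreducible exactly when $\chi_1\chi_2^{-1}\neq|\cdot|^{\pm1}$, and that when $\chi_1\chi_2^{-1}=|\cdot|^{\pm1}$ the representation has length two, with a one-dimensional constituent $\chi\circ\det$ and an infinite-dimensional one which, up to twist, is the standard Steinberg representation $\St$. Since $\St$ is the infinite-dimensional piece of $\pi(|\cdot|^{1/2},|\cdot|^{-1/2})$, it has trivial central character, so $\St\otimes\chi$ has central character $z\mapsto\chi(z)^2$ because $\det(zI_2)=z^2$. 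The third step is the exhaustion statement: an irreducible admissible representation of $\GL_2(\Q_p)$ is either a subquotient of some $\pi(\chi_1,\chi_2)$ --- yielding exactly the principal series and the special representations above --- or else has all matrix coefficients compactly supported modulo the center, and the latter are by definition the supercuspidals, whose explicit classification is the content of \cite{KutzkoSCI,KutzkoSCII}.

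For the conductor exponents I would pass through the local Langlands correspondence and the additivity of Artin conductors (equivalently multiplicativity of $\epsilon$-factors): $\pi(\chi_1,\chi_2)$ corresponds to the Weil--Deligne representation $\chi_1\oplus\chi_2$ (characters of $W_{\Q_p}$ via class field theory, with trivial monodromy), of Artin conductor $\cond(\chi_1)+\cond(\chi_2)$; and $\St\otimes\chi$ corresponds to the special Weil--Deligne representation on which $W_{\Q_p}$ acts by $\chi|\cdot|^{1/2}\oplus\chi|\cdot|^{-1/2}$ with nonzero monodromy $N$. For $\chi$ ramified the underlying representation already has conductor $2\cond(\chi)$ and the monodromy contributes nothing further; for $\chi$ unramified the underlying representation is unramified, so the only contribution is the monodromy term $\dim V^{I}-\dim(\ker N)^{I}=1$, giving conductor $1$. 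Alternatively one can obtain these exponents directly from newvector theory à la Casselman.

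I do not expect a genuine mathematical obstacle, as the statement is a recollection of classical results; if one insists on a self-contained argument, the most delicate point is the reducible (special) case --- pinning down the two constituents and, in particular, the non-naive behaviour of $\cond(\St\otimes\chi)$ under an unramified twist --- which forces either the monodromy bookkeeping above or a careful newvector computation. The existence and exhaustiveness of the supercuspidal family is likewise substantial, but that is precisely what the cited work of Kutzko provides.
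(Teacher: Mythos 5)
Your proposal is correct: the paper gives no proof of this theorem, recalling it as the classical Jacquet--Langlands/Casselman classification with references to Kutzko and Bushnell--Henniart, and your sketch (normalized induction from the Borel, intertwining-operator/Jacquet-module analysis of reducibility in the case $\chi_1\chi_2^{-1}=|\cdot|^{\pm1}$, supercuspidals defined as the complement, and conductor exponents read off from the Weil--Deligne side or from newvector theory) is exactly the standard argument supplied by those references. The only cosmetic difference is your normalization $\chi|\cdot|^{1/2}\oplus\chi|\cdot|^{-1/2}$ for the special representation versus the paper's $\chi\omega_1\oplus\chi$, an unramified-twist discrepancy that affects neither the inertial type nor the conductor computation.
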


Using the previous classification the local Langlands correspondence
is given explicitly by:
  \begin{enumerate}
  \item The Weil-Deligne representation attached to
    $\pi(\chi_1,\chi_2)$ via the local Langlands correspondence
    consists of the pair $(\chi_1 \oplus \chi_2,0)$, i.e. the Weil
    representation is given by the direct sum $\chi_1 \oplus \chi_2$
    (recall that we are identifying characters of the Weil group and
    of $\Q_p^\times$ via local class field theory) and the monodromy
    is trivial.
  \item The Weil-Deligne representation attached to the representation
    $\St \otimes \chi$ consists of the pair
    $\left(\chi \omega_1 \oplus \chi,\left(\begin{smallmatrix} 0 & 1\\
          0 & 0\end{smallmatrix} \right)\right)$, where $\omega_1$ is
    the unramified character giving the action of $W(\Q_p)$ on the
    roots of unity. This is the only case of non-trivial monodromy.
\item If $p \neq 2$, the Weil representation attached to the
  supercuspidal representations via the local Langlands correspondence equals
  $\Ind^{W(E)}_{W(\Q_p)}\theta$, where $E/\Q_p$ is a quadratic
  extension, and $\theta: W(E)\to\C^\times$ is a character.
  Furthermore, regarding $\theta$ as a character of $E^\times$, such
  representation is irreducible precisely when $\theta$ does not
  factor through the norm map $\Norm:E^\times\to\Q_p^\times$.  Let
  $\epsilon_{E}$ denote the quadratic character of $\Q_p^\times$
  associated by local class field theory to the extension
  $E/\Q_p$. The central character of $\BC$ equals
  $\theta|_{\Q_p}\cdot \epsilon_{E}$ and its conductor equals
\[
  \cond(\BC)=\begin{cases}
2\cond(\theta) & \text{ if }E/\Q_p \text{ is unramified},\\
\cond(\theta)+\cond(\epsilon_{E}) & \text{ otherwise.}
\end{cases}
\]
If $p=2$, besides the cases described above, the projective image of the
Weil representation can be one of the sporadic groups $A_4$ or $S_4$
corresponding to the \emph{sporadic supercuspidal representations} (as studied
by Weil in \cite{Weil}), see \ref{sporadicones} for more details.
\end{enumerate}

\begin{remark}
  The image of the inertia subgroup of a Weil representation lies in a
  finite extension of $\Q$, hence it makes sense to look at its Galois
  conjugates.
\end{remark}

\begin{dfn}
  Given $\pi_1,\pi_2 \in \Aa_p$ they have \emph{Galois conjugate local
    inertial type} if there exists $\sigma \in \Aut_\Q(\C)$ such that
  the local inertial type of $\tau(\pi_1)$ and $\sigma(\tau(\pi_2))$
  agree. By a \emph{local type Galois orbit} we mean an equivalence
  class of Galois conjugate local inertial types.
\end{dfn}

\begin{remark}
  Elements in the same local type Galois orbit need not have the same central character.
\end{remark}

\subsection{Counting local type Galois orbits}

Let $p$ be a prime number, and denote by $\LT(p^n)$ the number of
local type Galois orbits of conductor $n$ with trivial Nebentypus. For
$a$ a positive integer, let $\sigma_0(a)$ denote the number of
positive divisors of $a$.

\begin{thm}\label{prop:LOformula}
  Let $p \neq 2$ be a prime number. Then the values of $\LT(p^n)$ are given in table~\ref{table:pneq3}.
\begin{table}[h]
\scalebox{0.95}{
  \begin{tabular}{||l|c|c|c|c||}
  \hline
$n$ & $\text{P.S.}$ & $\St$ & $\text{S.C.U.}$ & $\text{S.C.R}$\\
\hline
  $1$ & --- & $1$ & --- & --- \\
  \hline
  $2$ & $\sigma_0(p-1)-1$ & $1$ & $\sigma_0(p+1)-2$ & --- \\
    \hline
    $\stackrel{p \neq 3}{n \ge 3 \text{ odd}}$ & --- & --- & --- & $2$\\
    \hline
    $\stackrel{p = 3}{n \ge 3 \text{ odd}}$ & --- & --- & --- & $4$\\
    \hline    
    ${n \ge 3}{\text{even}}$ & $\sigma_0(p-1)$ & --- & $\sigma_0(p+1)$ & ---  \\
    \hline
  \end{tabular}}
\caption{Values for $\LT(p^n)$  for $p \neq 2$.}
\label{table:pneq3}
\end{table}
\label{thm:mainthmodd}
\end{thm}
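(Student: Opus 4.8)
The plan is to go through the trichotomy of Theorem~\ref{thm:characterization} (principal series, Steinberg, supercuspidal) and count, for each conductor $n$, the Galois orbits of inertial types with trivial central character, using the explicit description of the local Langlands correspondence given after that theorem. Throughout, ``trivial Nebentypus'' means the central character is trivial, which for the Weil--Deligne representation means $\det \tau = 1$ on $\Q_p^\times$; restricting to inertia, the inertial type $\tilde\tau$ has trivial determinant on the inertia subgroup $I_p$. The key auxiliary fact is that since $p \neq 2$, the group $\Z_p^\times$ (the image of $I_p$ under local class field theory) is topologically cyclic: $\Z_p^\times \cong (\Z/p)^\times \times \Z_p$, so a character of $I_p$ of conductor $m$ is determined by its restriction to $(\Z/p^m)^\times$, and the orbit of such a character under $\Aut_\Q(\C)$ is determined precisely by its order (equivalently, by which cyclic subgroup of the character group it generates), since Galois acts on roots of unity transitively on those of a fixed order.

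First I would handle the \emph{Steinberg} case. By item (2) of the local Langlands dictionary, $\St\otimes\chi$ has Weil part $\chi\omega_1 \oplus \chi$; restricted to inertia this is $\chi|_{I_p}^{\oplus 2}$ (as $\omega_1$ is unramified), and the monodromy is a fixed nonzero nilpotent, which is Galois-stable. Trivial central character forces $\chi^2 = 1$, so $\chi$ is either unramified (giving conductor $1$, the type $\St$ itself, one orbit) or the quadratic ramified character (giving conductor $2\cond(\chi) = 2$, one orbit); there are no Steinberg types of conductor $\geq 3$. This yields the $\St$ column: a single $1$ in rows $n=1$ and $n=2$, blank elsewhere.

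Next, the \emph{principal series} case. Here $\tilde\tau = \chi_1|_{I_p} \oplus \chi_2|_{I_p}$ with $\chi_1\chi_2 = 1$ on $\Q_p^\times$ (trivial central character) and the irreducibility constraint $\chi_1\chi_2^{-1} \neq |\ |^{\pm 1}$, which for distinct inertial types is automatic. Writing $\psi = \chi_1|_{I_p}$, we have $\chi_2|_{I_p} = \psi^{-1}$, so the type is $\psi \oplus \psi^{-1}$; the conductor is $2\cond(\psi)$, forcing $n$ even. Two such types $\psi\oplus\psi^{-1}$ and $\psi'\oplus\psi'^{-1}$ are Galois-conjugate iff $\psi'$ has the same order as $\psi$ (up to inversion, which does not change the order), so the orbits of conductor $n=2m$ correspond to the divisors $d \mid |(\Z/p^m)^\times| = p^{m-1}(p-1)$ that actually arise as orders of characters of exact conductor $m$; for $m=1$ these are exactly the divisors of $p-1$, and one checks $d=1$ gives the unramified (hence conductor $0$) type, which must be excluded — giving $\sigma_0(p-1)-1$ for $n=2$ — while for $m \geq 2$ one must have $p \mid d$, and a short count gives $\sigma_0(p-1)$ for $n\geq 4$ even, and nothing for $n$ odd. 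I should double-check the edge case whether $\psi = \psi^{-1}$ (i.e.\ $\psi$ quadratic) causes any collision with the Steinberg column — it does not, because principal series and Steinberg types are distinguished by the monodromy, which is part of the Weil--Deligne datum but \emph{not} of the inertial type; here one uses the convention, implicit in the paper, that the inertial type remembers enough (or that one counts types of representations, not bare $I_p$-representations), and I would add a sentence making this precise.

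Finally, the \emph{supercuspidal} case, which is the main obstacle. By item (3), for $p \neq 2$ every supercuspidal $\pi$ is $\Ind_{W(E)}^{W(\Q_p)}\theta$ for a quadratic $E/\Q_p$ and a character $\theta$ of $E^\times$ not factoring through the norm; $E$ is one of the unramified quadratic extension, or one of the two ramified ones. The central character condition $\theta|_{\Q_p^\times}\cdot\epsilon_E = 1$ pins down $\theta|_{\Q_p^\times} = \epsilon_E$, and restricting the induced representation to inertia gives $\Ind_{I_E}^{I_p}(\theta|_{I_E})$ when $E$ is unramified (so $I_E = I_p$ and this is $\theta|_{I_p} \oplus \theta^{\sigma}|_{I_p}$ for $\sigma$ the nontrivial element of $\Gal(E/\Q_p)$), and something conductor-sensitive when $E$ is ramified. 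The counting then splits: for $E$ unramified the conductor is $2\cond(\theta)$, even, and Galois orbits are governed by the order of $\theta|_{I_E}$ relative to the $\sigma$-action and the norm constraint, producing $\sigma_0(p+1)-2$ for $n=2$ and $\sigma_0(p+1)$ for $n\geq 4$ even — the ``$p+1$'' reflecting that $I_E$ for the unramified extension has residue group of order $p^2-1$ and one quotients by the norm/Galois action to land in the relevant cyclic group of order dividing $p+1$; the two subtractions at $n=2$ remove the cases that degenerate to a principal series or become reducible. For $E$ ramified the conductor is $\cond(\theta)+\cond(\epsilon_E) = \cond(\theta)+1$, so odd $n\geq 3$, and one gets a fixed small number of orbits independent of $n$: generically $2$ (the choice among the two ramified $E$'s, which become Galois-conjugate or not depending on whether $p \equiv 1$ or $3 \bmod 4$, combined with the sign ambiguity in $\theta$), but for $p=3$ there is an extra degeneracy — the quadratic subfield structure and the small residue field make two characters that are distinct become identifiable, or conversely split an orbit — giving $4$ instead of $2$; isolating exactly why $p=3$ is special (it is where $\sigma_0(p+1) = \sigma_0(4) = 3$ and where certain characters of $\F_9^\times$ or of the ramified tame quotient collide with quadratic ones) is the delicate point and I would treat it as its own lemma. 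The hardest part of the whole argument is this supercuspidal bookkeeping when $E$ is ramified and $p=3$, where one genuinely has to write down the tame characters explicitly and track the $\Aut_\Q(\C)$-action on them by hand.
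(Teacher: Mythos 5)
Your overall strategy coincides with the paper's: run through the trichotomy of Theorem~\ref{thm:characterization}, translate trivial central character into constraints on $\chi_1\chi_2$ or $\theta|_{\Q_p^\times}$, and count Galois orbits by the order of the relevant character using the cyclic structure of $(\Z_p/p^m)^\times$ (and its analogue over the quadratic extension $E$). The Steinberg and principal-series columns are handled correctly, and your observation that one must decide whether the monodromy operator is carried by the ``inertial type'' is a real point (the paper's counts at $n=2$ only make sense if it is: otherwise $\St\otimes\chi$ with $\chi$ ramified quadratic would coincide with the $\psi\oplus\psi^{-1}$ principal-series type for $\psi$ quadratic). But note you first assert that monodromy is \emph{not} part of the inertial type, then hedge; the paper's convention is that it is, so the initial claim should be dropped.

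The genuine gap is the ramified supercuspidal bookkeeping, where your reasoning does not actually establish the entries $2$ and $4$. The paper's argument is: for $p\neq 2$ ramified $E$, $\cond(\epsilon_E)=1$, so $n=\cond(\theta)+1$; one shows primitive $\theta$ compatible with $\epsilon_E$ exist only for $\cond(\theta)$ even (because the $(1+p)$-direction would force non-primitivity at odd conductor), and that for each even $\cond(\theta)\ge 2$ each of the two ramified quadratic extensions contributes exactly \emph{one} Galois orbit (Lemma~\ref{lemma:characters}), hence the entry $2$ for $p\neq 3$. The mechanisms you cite instead are wrong: the two ramified extensions do not ``become Galois-conjugate depending on $p \bmod 4$,'' and ``the sign ambiguity in $\theta$'' (the two extensions of $\theta|_{\Om_E^\times}$ to $E^\times$) is invisible to the inertial type, so it cannot enter the orbit count. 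For $p=3$ the correct source of the extra orbits is that $\Q_3(\sqrt{-3})$ contains $\mu_3$, giving an extra $\Z/3$ factor in $(\Om_E/\id{p}^n)^\times$ (Theorem~\ref{thm:quotients}), so Lemma~\ref{lemma:characters} yields $3$ orbits from that field and $1$ from $\Q_3(\sqrt{3})$, total $4$; your appeals to $\sigma_0(p+1)=\sigma_0(4)=3$ and to $\F_9^\times$ concern the \emph{unramified} extension and are irrelevant here (the residue field of a ramified extension of $\Q_3$ is $\F_3$). You also omit why there is no S.C.R.\ entry at $n=2$: the unique $\cond(\theta)=1$ case either factors through the norm (for $p\equiv 1\bmod 4$, hence reducible and discarded via Lemma~\ref{lemma:normmap}) or has inertial type coinciding with an unramified supercuspidal one (for $p\equiv 3\bmod 4$, cf.\ \cite[Theorem 2.7]{Gerardin75}), so it is never counted in the S.C.R.\ column. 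These are precisely the steps that would need to be filled in to turn your sketch into a proof.
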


\begin{remark}
  There exists a ramified supercuspidal representation of conductor
  $2$ for $p \equiv 3 \pmod 4$, but its local type matches that of an
  unramified supercuspidal representation (see for example
  \cite[Theorem 2.7]{Gerardin75}), which is why we do not count it in the previous table.
\end{remark}

By Theorem~\ref{thm:characterization}, to compute $\LT(p^n)$ it is
enough to count the number of Galois orbits for the Principal Series,
the Steinberg and the Supercuspidal types. The Steinberg type is the
easy one (they are all twists of $\St$), while the Principal Series
count comes from the well known group structure of
$(\Z_p/p^n)^\times$.

Supercuspidal representations are induced from a character $\theta$ of
a quadratic extension $E$ of $\Q_p$. By
Theorem~\ref{thm:characterization} such induction has trivial
Nebentypus precisely when the restriction of $\theta$ to $\Q_p^\times$
is fixed (and matches that of $\epsilon_E$).  Clearly two induced
representations have Galois conjugate inertial types precisely when
the quadratic field $E$ is the same for both of them, and the two
characters are Galois conjugate. This occurs precisely when one is a
power (prime to the order) of the
other.

Let $E=\Q_p(\sqrt{d})/\Q_p$ be a quadratic extension, let $e$ denote
the ramification degree of $E/\Q_p$, let $\Om_E$ denote the ring of
integers of $E$ and $\id{p}$ its maximal ideal. Let $k$ denote the
residual field $\Om_E/\id{p}$, and $q = p^r = \# k$. For $n$ a
positive integer let $\xi_n$ denote a primitive $n$-th root of unity.

\begin{thm}
  Let $n$ be a positive integer and let $d \in \{\pm 1, \pm3\}$. Then
  the group structure of $(\Om/\id{p}^n)^\times$ is given in Table
  \ref{table:groupstructure} where: -- means no condition, and the pair $(a,b)$
  satisfies the following two conditions (which determines them uniquely):
  \begin{itemize}
  \item  $a+b=n-1$,
  \item $a=b$ if $n$ is odd,
    
  \item $a=b+1$ if $n$ is even.
  \end{itemize}
\begin{table}[h]
\scalebox{0.95}{
\begin{tabular}{||c|c|c|c||c|c||}
  \hline
$E$ & $e$ &$p$ &  $n$ & Structure & $\text{Generators}$\\
  \hline
--- & $1$ & $\neq 2$ & ---& $\F_q^\times \times \Z/p^{n-1} \times \Z/p^{n-1}$ & $\{\xi_{p^2-1},1+p,1+p\sqrt{d} \}$\\
\hline
--- & $1$ & $2$  & $\ge 2$ & $\F_4^\times \times \Z/2 \times \Z/2^{n-2} \times \Z/2^{n-1}$ & $\{ \xi_3,-1,5+4\sqrt{5}, \sqrt{5}\}$\\
  \hline
$\neq \Q_3(\sqrt{-3})$ & $2$ & $\neq 2$ & --- &$\F_p^\times \times \Z/p^a \times \Z/p^b$ & $\{\xi_{p-1},1+p,1+p\sqrt{d}\}$\\
  \hline
  $\Q_3(\sqrt{-3})$ & $2$ & $3$ & $\ge 2$ & $\F_3^\times \times \Z/3 \times \Z/3^{a-1} \times \Z/3^{b}$ & $\{-1,\xi_3,4,1+3\sqrt{-3} \}$\\
  \hline
  $\Q_2(\sqrt{-1})$ & $2$ & $2$  & $\ge 3$ & $\Z/4 \times \Z/2^{b-1} \times \Z/2^{a-1}$ & $\{\sqrt{-1},5,1+2\sqrt{-1}\}$\\
  \hline
  $\Q_2(\sqrt{3})$ & $2$ & $2$  & $\ge 5$ & $\Z/2 \times \Z/2^{a-1} \times \Z/2^{b}$ & $\{-1,\sqrt{3},1+2\sqrt{3} \}$\\
  \hline
    $\Q_2(\sqrt{2d})$ & $2$ & $2$ & $\ge 5$ & $\Z/2 \times \Z/2^{b-1} \times \Z/2^{a}
$ & $\{ -1,5,1+\sqrt{2d}\}$\\
  \hline
\end{tabular}
}
\caption{Group structure of $\On$.}
\label{table:groupstructure}
\end{table}
\label{thm:quotients}
\end{thm}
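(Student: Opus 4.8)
The plan is to reduce the statement to the well understood structure of the principal unit filtration of a local field, and then pin that filtration down by explicit generators. Fix a quadratic extension $E/\Q_p$ as in the statement, let $\pi$ be a uniformizer of $\Om_E$, write $q=\#(\Om_E/\id{p})$, let $e$ be the ramification index, and set $U_j=(1+\id{p}^j)/(1+\id{p}^n)$ for $1\le j\le n$, so that $(\Om_E/\id{p}^n)^\times=U_0\supseteq U_1\supseteq\cdots\supseteq U_n=\{1\}$ and $U_0/U_1\cong(\Om_E/\id{p})^\times$. The quotient $U_0/U_1$ is cyclic of order $q-1$, which is prime to $p$, so by Hensel's lemma the surjection $U_0\to U_0/U_1$ splits, a splitting being the Teichm\"uller lift of a generator of the residue field. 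This produces the tame direct factor — $\F_{p^2}^\times$ in the unramified case, $\F_p^\times$ when $p$ is odd and $E/\Q_p$ is ramified, $\F_4^\times$ when $p=2$ and $E/\Q_p$ is unramified, and the trivial group when $p=2$ and $E/\Q_p$ is ramified — together with the corresponding Teichm\"uller generators. It remains to describe the finite abelian $p$-group $U_1$, which has order $q^{n-1}$.

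For $U_1$ I would use the $p$-adic logarithm. Let $c$ be the least integer with $c>e/(p-1)$. Then $\log$ and $\exp$ give mutually inverse isomorphisms of $\Z_p$-modules between $1+\id{p}^j$ and $\id{p}^j$ for all $j\ge c$, so $U_c\cong\id{p}^c/\id{p}^n\cong\Om_E/\id{p}^{n-c}$ as abelian groups; since $\Om_E\cong\Z_p^{[E:\Q_p]}$, the last group is $(\Z/p^{n-c})^2$ when $e=1$ and $\Z/p^{\lceil(n-c)/2\rceil}\times\Z/p^{\lfloor(n-c)/2\rfloor}$ when $e=2$ — this is where the pairs $(a,b)$ of the ramified rows come from. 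When $p-1>e$ — that is, for every odd $p$ in the unramified case and for $p\ge 5$ in the ramified case — one has $c=1$ and the description of $U_1$ is already complete, a $\Z_p$-basis of $\id{p}$ mapping under $\exp$ to a generating set of $U_1$ of the claimed orders. When $p-1\le e$ (i.e. $p=3$ ramified, and all of $p=2$) a short bottom segment $U_1/U_c$ must be spliced on top. Here I would exhibit the tabulated elements as generators and read off the order of each from the expansion $(1+x)^p=1+px+\binom{p}{2}x^2+\cdots+x^p$, using that $v_E\big((1+x)^p-1\big)=\min\big(e+v_E(x),\,p\,v_E(x)\big)$ whenever these two valuations differ. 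They coincide precisely when $(p-1)\mid e$ and $v_E(x)=e/(p-1)$, and this coincidence is the one source of $p$-power torsion in $U_1$: it accounts for the extra factor $\langle\xi_3\rangle=\mu_3(E)$ in the row $E=\Q_3(\sqrt{-3})=\Q_3(\zeta_3)$, and for the factors generated by $-1$ — or by $\sqrt{-1}$, spanning $\mu_4(E)$, when $E=\Q_2(\sqrt{-1})$ — in the $p=2$ rows. Once generators and their orders are in place, the tautological surjection from the product of the claimed cyclic groups onto $U_1$ is a map between finite groups of the same order $q^{n-1}$, and hence an isomorphism, which closes each row.

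The genuinely delicate part, as the introduction signals, is $p=2$. There $c=e+1$ is never $1$, the valuation coincidence above always occurs at $v_E(x)=e$, $-1$ always lies in $U_1$, and $\sqrt{-1}\in U_1$ precisely when $E=\Q_2(\sqrt{-1})$. One must run the analysis of the previous paragraph separately for the seven quadratic extensions of $\Q_2$: the unramified one $\Q_2(\sqrt 5)=\Q_2(\sqrt{-3})$, with residue field $\F_4$, and the six ramified ones, which fall into the four patterns of Table~\ref{table:groupstructure} according to the class of a chosen uniformizer and to whether $\sqrt{-1}$ (or $\sqrt 5$) lies in $E$. For each one the work is: (i) record which principal units are squares; (ii) pick a uniformizer and a workable $\Z_2$-basis of $\id{p}$; (iii) identify $\mu_{2^\infty}(E)$ and a generator of it; (iv) assemble the cyclic factors with their orders and confirm the generating property by an order count. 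The lower bounds $n\ge 2$, $n\ge 3$, $n\ge 5$ in the table are exactly the thresholds below which the bottom segment is too short for the displayed pattern to hold, and the finitely many smaller $n$ are settled by direct computation. For odd $p$ the analogous but far lighter case analysis only has to separate $\Q_3(\zeta_3)$ from the remaining ramified extensions. The main obstacle is therefore organisational rather than conceptual: laying out the $p=2$ case division cleanly and tracking the torsion coming from $-1$ and $\sqrt{-1}$ through every filtration step.
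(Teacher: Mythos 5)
The paper does not give an argument here: its ``proof'' of Theorem~\ref{thm:quotients} is a pointer to Ranum and to Neukirch, Chapter~II. Your sketch reconstructs exactly the standard argument in those references --- split off the tame part by Teichm\"uller lift, identify $U_c=1+\id{p}^c$ with $\id{p}^c/\id{p}^n\cong\Om_E/\id{p}^{n-c}$ via $\log/\exp$ once $c>e/(p-1)$, control the short quotient $U_1/U_c$ by the valuation formula for $(1+x)^p-1$ and the resulting $p$-power roots of unity, and close each row by comparing the order of the proposed product of cyclic groups with $\#U_1=q^{n-1}$. This is a correct route and effectively the same one the paper is deferring to; the remaining work you flag (checking that the displayed elements actually generate, and settling the finitely many small $n$ below each threshold) is mechanical and is exactly what the cited sources carry out.
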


\begin{proof}
  See for example \cite{Ranum1910} or \cite[Chapter II]{Neukirch}. 
\end{proof}

\begin{remark}
  For completeness, the missing small values are:
  $(\Om/2)^\times \simeq \Z/2$ if $E/\Q_2$ is ramified; if
  $E=\Q_2(\sqrt{3})$ or $\Q_2(\sqrt{2d})$,
  $(\Om/\id{p}_2^3)^\times \simeq \Z/4$ and
  $(\Om/\id{p}_2^4)^\times \simeq \Z/4 \times \Z/2$.
\end{remark}

\begin{lemma}
  Let $E/\Q_p$ be a quadratic extension and let $\delta$ denote the
  valuation of the discriminant of $E$. The number of inertial type
  Galois orbits of primitive characters
  $\theta:E^\times \to \C^\times$ of conductor $n$ whose restriction
  to $\Q_p^\times$ matches the character of the extension $E/\Q_p$ is
  given in Table~\ref{table:characters}.
\begin{table}[h]
\begin{tabular}{||l|c|c|c||c||}
  \hline
  $E$ & $e$ & $p$ & $n$ & \# Prim. Char.\\
  \hline
  --- & $1$ & $\neq 2$ & --- &  $\sigma_0(p+1) $\\
  \hline
  --- & $1$ & $2$ & $\ge 3$ &  $4 $\\
  \hline
  --- & $2$ & $\neq 2$ & $1$ & $1$\\
  \hline
  $\neq \Q_3(\sqrt{-3})$ & $2$ & $\neq 2$ & $\stackrel{n \ge2}{odd \, \vert\, even}$ & $0 \, \vert \, 1$\\
  \hline
  $\delta =3$ & $2$ & $2$ & $\stackrel{n \ge 6}{odd \, \vert \, even}$ &  $0 \, \vert \, 1$\\
  \hline
    $\delta =3$ & $2$ & $2$ & $n=5$ &  $3$\\
  \hline
  $\Q_3(\sqrt{-3})$ & $2$ & $3$ & $2$ & $1$\\
  \hline
  $\Q_3(\sqrt{-3})$ & $2$ & $3$ & $\stackrel{n\ge 3}{odd \vert even}$ & $0 \, \vert \, 3$\\
  \hline
  $\delta=2$ & $2$ & $2$ & $3,4$  & $1$\\
  \hline
  $\delta=2$ & $2$ & $2$ & $\stackrel{n \ge 6}{odd \, \vert \, even}$ & $0 \, \vert \, 2$\\
\hline
\end{tabular}
\caption{Number of primitive characters}
\label{table:characters}
\end{table}
 \label{lemma:characters}
\end{lemma}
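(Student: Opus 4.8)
The plan is to reduce the problem, via local class field theory, to counting Galois orbits of characters of the finite group $(\Om_E/\id{p}^n)^\times$ with a prescribed restriction, using the explicit group structure from Theorem~\ref{thm:quotients}. First I would recall that a character $\theta:E^\times\to\C^\times$ of conductor $n$ is the same datum as a character of $E^\times/(1+\id{p}^n)$; pinning down its restriction to $\Q_p^\times$ to equal $\epsilon_E$ (the quadratic character of the extension $E/\Q_p$) cuts down the group of allowed characters. Two such characters $\theta,\theta'$ give Galois-conjugate inertial types of the induced representation precisely when $\theta'$ is a Galois conjugate of $\theta$ or of $\theta^{\mathrm{Frob}}$ (the conjugate by the nontrivial element of $\Gal(E/\Q_p)$), since inducing $\theta$ and inducing $\theta^{\mathrm{Frob}}$ give isomorphic representations; and Galois conjugation over $\Q$ acts on a character of a cyclic group of order $m$ by raising to a power coprime to $m$. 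So the count on each cyclic factor of exact order $m$ will be governed by $\sigma_0(m)$ (the orbits of the $(\Z/m)^\times$-action on $\Z/m$ are indexed by divisors of $m$), with a correction by the $\mathrm{Frob}$-involution and by the conductor-exactly-$n$ condition.

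The heart of the argument is then a case-by-case run through the rows of Table~\ref{table:characters}, matching them to the rows of Table~\ref{table:groupstructure}. In the unramified case $e=1$, $p\neq 2$: here $(\Om_E/\id{p}^n)^\times\simeq\F_q^\times\times\Z/p^{n-1}\times\Z/p^{n-1}$ with $q=p^2$, and the subgroup $\Q_p^\times$ maps onto $\F_p^\times\times\Z/p^{n-1}$ (generated by $\xi_{p^2-1}^{p+1}$ and $1+p$); a character trivial on nothing but with fixed restriction is determined by a character of $\F_q^\times$ lifting a fixed one of $\F_p^\times$ together with a character of the ``anti-diagonal'' $\Z/p^{n-1}$. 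Counting primitive ones (conductor exactly $n$) and then taking the $\mathrm{Frob}$-quotient yields $\sigma_0(p+1)$, which is exactly how the classical count of supercuspidal types of level $p^2$ arises; for $n\geq 3$ odd there are none because the conductor of an induced character from the unramified quadratic extension is always even, while for $n\ge 3$ even the factor of $p$-power order is forced and only $\F_q^\times$ contributes, again giving $\sigma_0(p+1)$ — but one must be careful that the constraint already appears at $n=2$, so the ``$\ge 3$'' row lists $4$ only for $p=2$; for $p\neq 2$ these are folded into the ``$-$'' row. In the ramified case $e=2$, $p\neq 2$: now $\delta=1$, the conductor of $\Ind\theta$ is $\cond(\theta)+\cond(\epsilon_E)=\cond(\theta)+1$, so one needs $\cond(\theta)=n-1$; using $(\Om_E/\id{p}^n)^\times\simeq\F_p^\times\times\Z/p^a\times\Z/p^b$ with $(a,b)$ as in the statement, the restriction to $\Q_p^\times$ (which for the ramified extension has index considerations governed by the norm) kills one $p$-factor, and a short parity analysis shows the answer is $0$ for $n$ odd (the relevant $p$-factor has too small exponent) and $1$ for $n$ even. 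The exceptional extensions $\Q_3(\sqrt{-3})$ and the dyadic ones ($\delta=2$ or $3$) are handled identically but with the modified group structures from the corresponding rows of Table~\ref{table:groupstructure}: the extra $\Z/2$ or $\Z/3$ torsion, and the small discrepancy in how $\Q_p^\times$ sits inside, account for the multiplicities $2$, $3$, and $4$ and for the shifted ranges ($n\ge 5$, $n\ge 6$, $n=5$ separately).

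The main obstacle will be the dyadic and wildly-ramified exceptional cases — rows with $p=2$ and $e=2$ (the extensions $\Q_2(\sqrt{-1})$, $\Q_2(\sqrt{3})$, $\Q_2(\sqrt{2d})$) together with $\Q_3(\sqrt{-3})$. Here one cannot simply quote the tame formula: the image of $\Q_p^\times$ inside $(\Om_E/\id{p}^n)^\times$ must be located very explicitly using the generators in Table~\ref{table:groupstructure} (e.g. deciding whether $-1$, or $5$, or a uniformizer-related element lies in the norm subgroup), and the condition ``conductor exactly $n$'' interacts delicately with the two-step filtration of torsion (the $\Z/2$ versus $\Z/2^{a-1}$ split). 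In particular the isolated entries $n=5$ for $\delta=3$ giving $3$, and $n=3,4$ for $\delta=2$ giving $1$, come from the $(\Om/\id{p}^n)^\times$ being smaller than the generic pattern predicts, and each needs an individual check against the ``missing small values'' remark after Theorem~\ref{thm:quotients}. I would organize this as a lemma-internal table mirroring Table~\ref{table:characters} and verify each cell, cross-checking the totals against the known classification of supercuspidal representations of $\GL_2(\Q_2)$ (via Weil's paper and \cite{Gerardin75}) to catch off-by-one errors.
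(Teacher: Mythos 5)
Your overall plan is the one the paper actually follows: reduce, via local class field theory, to counting Galois orbits of characters of $(\Om/\id{p}^n)^\times$ with prescribed restriction to $\Z_p^\times$, using the explicit generators from Theorem~\ref{thm:quotients}, and then go row by row. The remark about $\theta^{\mathrm{Frob}}$ is correct but in this setting harmless: since $\theta|_{\Z_p^\times}=\epsilon_E|_{\Z_p^\times}$ forces $\theta\cdot\theta^{\mathrm{Frob}}$ to vanish on $\Om_E^\times$, one has $\theta^{\mathrm{Frob}}=\theta^{-1}$ on inertia, and $\theta^{-1}$ is already in the $\Q$-Galois orbit of $\theta$; so the Frobenius involution adds no genuine correction, and the paper silently absorbs it.

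There is, however, a real conflation running through your case analysis: the lemma's $n$ is the conductor of the \emph{character} $\theta$ itself, not the conductor of the induced representation $\Ind_{W(E)}^{W(\Q_p)}\theta$. The sentence ``for $n\geq 3$ odd there are none because the conductor of an induced character from the unramified quadratic extension is always even'' is a statement about the supercuspidal level $2\cond(\theta)$, not about $\cond(\theta)$, and contradicts the first row of the table, which gives $\sigma_0(p+1)$ for \emph{every} $n$ when $E/\Q_p$ is unramified and $p\neq 2$. In the ramified odd case the line ``so one needs $\cond(\theta)=n-1$'' makes the same substitution (you apply $\cond(\Ind\theta)=\cond(\theta)+\cond(\epsilon_E)$, whereas the lemma fixes $\cond(\theta)=n$), and then you nevertheless work with $(\Om/\id{p}^n)^\times$ as if $\cond(\theta)=n$, so the argument is internally inconsistent. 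The parity alternation $0\,\vert\,1$ in that row actually comes from the filtration of $(1+\id{p})/(1+\id{p}^n)$: $\theta$ is forced to be trivial on the factor controlled by $1+p$ because $\epsilon_E$ has conductor exponent $1$, and whether the top layer of the filtration at level $n$ is accounted for by a power of $1+p$ or of $1+p\sqrt{d}$ determines whether a primitive $\theta$ can exist. Carrying out your plan as written would attach the arguments to the wrong $n$'s. Finally, you explicitly defer all of the dyadic and $\Q_3(\sqrt{-3})$ computations (``I would organize this as a lemma-internal table \dots and verify each cell''); since the isolated entries such as the multiplicity $3$ at $\delta=3$, $n=5$ and the collapse to $1$ at $\delta=2$, $n=3,4$ are exactly where the content of the lemma resides, these cannot be left as a placeholder in a finished proof.
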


\begin{proof} Given the group structure and generators of
  Table~\ref{table:groupstructure}, it is enough to define a character
  in each of them.

  Suppose that $E/\Q_p$ is unramified and $p \neq 3$.  The condition
  $\theta|_{(\Z_p)^\times}=1$ implies that $\theta$ is trivial in the
  second generator. The primitive condition implies that its value at the
  third generator must be a primitive $p^{n-1}$ root of unity, and its
  value on $\xi_{p^2-1}$, is an element of order dividing $p+1$. Up to
  conjugation, the last value is the only free one, hence the total
  number equals $\sigma_0(p+1)$. The case $p=3$ works the same.  For
  $p=2$ there is $1$ for $n=1$, $2$ for $n=2$ and $4$ for $n\ge 3$.

  If $E/\Q_p$ is ramified, either $n=1$ (hence
  $\Om_E/\id{p} \simeq \Z/p$) in which case there is a unique
  character (namely that of $\epsilon_E$) or primitive characters only
  appear for even exponents. The reason is that for odd conductor
  exponents $(1+p)$ increases its order but $\theta$ is trivial on
  such element giving non-primitive characters. There are some
  exceptions, namely when $E/\Q_2$ is ramified. For example: if
  $\Disc(E/\Q_2)=2^2$, the condition
  $\theta|_{\Q_2^\times}=\epsilon_E$ implies that $n \ge3$ hence
  characters of conductor $3$ are primitive. If $E = \Q_2(\sqrt{2d})$
  then $\epsilon_E(5)=-1$ so $n \ge5$ and characters of conductor $5$
  are also primitive. The number of characters in each case follows
  easily from the generators and the group structure given in
  Table~\ref{table:groupstructure}.
\end{proof}

Supercuspidal automorphic representations correspond via local
Langlands to irreducible induced representations of a character
$\theta$ from a quadratic extension $E$, and the irreducibility
condition is equivalent to $\theta$ not factoring through the norm
map.

\begin{lemma}
  Let $E/\Q_p$ be a quadratic extension. The inertia type Galois
  orbits of characters $\theta$ that factor through the norm map
 are:
  \begin{enumerate}[i)]
  \item The trivial one (of conductor $0$).
  \item A conductor $1$ one if $E/\Q_p$ is unramified.
  \item A conductor $2$ and two of conductor $3$ if $E/\Q_2$ is unramified.
  \item A conductor $1$ one if $E/\Q_p$ is ramified and $p\equiv 1 \pmod 4$.
  \item Two quadratic of conductor $5$ for $E=\Q_2(\sqrt{2})$
    or $\Q_2(\sqrt{-6})$.
  \end{enumerate}
\label{lemma:normmap}
\end{lemma}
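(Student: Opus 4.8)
The plan is to determine exactly which characters $\theta:E^\times\to\C^\times$ factor through the norm map $\Norm:E^\times\to\Q_p^\times$ and whose restriction to $\Q_p^\times$ matches $\epsilon_E$ (so that the induction has trivial Nebentypus), and then sort them into inertial type Galois orbits by conductor. A character factors through $\Norm$ precisely when it is of the form $\chi\circ\Norm$ for some character $\chi$ of $\Q_p^\times$; the constraint $\theta|_{\Q_p^\times}=\epsilon_E$ then becomes $\chi^2\cdot\epsilon_E = \epsilon_E$ on $\Q_p^\times$ (using that $\Norm$ restricted to $\Q_p^\times$ is squaring), i.e. $\chi^2$ is trivial on $\Q_p^\times$, so $\chi$ is a quadratic (or trivial) character of $\Q_p^\times$. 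Hence the candidates for $\theta$ are exactly $\chi\circ\Norm$ with $\chi$ running over $\{1,\epsilon_E,$ and the other quadratic characters of $\Q_p^\times\}$; first I would enumerate these, compute the conductor of $\chi\circ\Norm$ from the conductor of $\chi$ via the conductor-discriminant relation (the image of inertia of $E$ under $\Norm$ lands in the units with a jump governed by $\delta=\val(\Disc(E/\Q_p))$), and then identify which ones give Galois-conjugate inertial types.

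First I would handle the unramified case $E/\Q_p$ with $p$ odd: the only quadratic character of $\Q_p^\times$ whose composition with $\Norm$ can be nontrivial on inertia is the one ramified at $p$, but $\Norm(\Om_E^\times)=\Z_p^\times$ surjects, so $\chi\circ\Norm$ is ramified of conductor $1$; together with $\chi$ unramified giving conductor $0$, this yields items (i) and (ii). For $p=2$ unramified there are three nontrivial quadratic characters of $\Q_2^\times$ (ramified of conductors $2$ and $3$), and composing with the (surjective on units) norm gives one character of conductor $2$ and two of conductor $3$, whose inertial types form the orbits in (iii) — the "two of conductor $3$" reflecting that they are Galois-conjugate in pairs under $\Aut_\Q(\C)$, collapsing to two orbits rather than being further identified. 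In the ramified case, $\Norm(\Om_E^\times)$ is an index-$2$ (or more) subgroup of $\Z_p^\times$, so the conductor of $\chi\circ\Norm$ drops relative to $\chi$; for $p\equiv 1\pmod 4$ ramified, the quadratic character $\chi$ with $\chi\circ\Norm$ ramified has conductor exponent $1$ on the induced side, giving item (iv); for $p=2$ the exceptional ramified extensions $\Q_2(\sqrt2)$ and $\Q_2(\sqrt{-6})$ have $\delta=3$, and the relevant quadratic characters of $\Q_2^\times$ pull back under $\Norm$ to conductor-$5$ characters, two Galois orbits, giving item (v).

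The bookkeeping obstacle — and the step I expect to be the main one — is the $p=2$ ramified analysis: one must be careful that $\Norm:E^\times\to\Q_2^\times$ is not surjective onto $\Z_2^\times$, compute the image precisely for each of the ramified quadratic extensions using Table~\ref{table:groupstructure}, and then feed the conductor of each candidate $\chi$ through the local conductor-discriminant formula $\cond(\chi\circ\Norm)$ expressed in terms of the upper-numbering filtration and the different of $E/\Q_2$. One must also check that for most ramified $E/\Q_2$ (those with $\delta=2$, or $\delta=3$ at odd residue characteristic), none of the quadratic $\chi\ne 1,\epsilon_E$ produce a genuinely new character with the required central behavior — this is why those cases contribute nothing beyond (i) and (iv). The argument for why Galois conjugacy groups the conductor-$3$ (resp. conductor-$5$) characters into exactly two orbits is the same "power prime to the order" criterion already used for primitive supercuspidal types: two characters $\chi_1\circ\Norm$, $\chi_2\circ\Norm$ have Galois-conjugate inertial types iff $\chi_1$ and $\chi_2$ are Galois-conjugate as characters of $\Q_2^\times$, which partitions the quadratic-twist candidates accordingly. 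Everything else is a finite case check against Theorem~\ref{thm:quotients} and the conductor formulas of Theorem~\ref{thm:characterization}.
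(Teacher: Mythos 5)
There is a genuine gap, and it sits at the very first step: the constraint you derive on $\chi$ is wrong. Writing $\theta=\chi\circ\Norm$, the restriction of $\theta$ to $\Q_p^\times$ is $\chi^2$ (norm is squaring there), so the trivial-Nebentypus condition $\theta|_{\Q_p^\times}=\epsilon_E$ forces $\chi^2=\epsilon_E$ on $\Q_p^\times$ — not, as you claim, $\chi^2\cdot\epsilon_E=\epsilon_E$, i.e.\ $\chi^2=1$. (You appear to have substituted the central character $\theta|_{\Q_p^\times}\cdot\epsilon_E$ of the induced representation for the restriction $\theta|_{\Q_p^\times}$ itself.) This slip is harmless in the unramified cases, since there $\epsilon_E$ is unramified and both conditions agree on $\Z_p^\times$, so your enumeration recovers (i)–(iii). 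But it destroys the ramified cases. If $\chi$ is quadratic or trivial, then $\theta|_{\Q_p^\times}=\chi^2=1\neq\epsilon_E$, so no quadratic $\chi$ satisfies the Nebentypus condition at all when $E/\Q_p$ is ramified; moreover $\chi\circ\Norm$ is then trivial on $\Norm(\Om_E^\times)\supseteq$ the squares of units, so for $p$ odd ramified it is unramified (never of conductor $1$), and for $p=2$, $\delta=3$ a conductor computation (e.g.\ via $\cond(\chi)+\cond(\chi\epsilon_E)-\delta$) gives conductor at most $2$, never $5$. So your mechanism cannot produce items (iv) and (v), which is exactly where the content of the lemma lies.

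The correct argument, which your proposal is missing, is that $\phi$ (your $\chi$) must be a \emph{square root} of $\epsilon_E$: when $E/\Q_p$ is ramified and $p$ is odd, $\epsilon_E|_{\Z_p^\times}$ is the nontrivial quadratic character of $\F_p^\times$, so $\phi|_{\Z_p^\times}$ must have order $4$, which exists precisely when $p\equiv1\pmod4$ — this is where the congruence in (iv) comes from, and the resulting $\theta=\phi\circ\Norm$ has conductor $1$ because $\Norm(\Om_E^\times)=(\Z_p^\times)^2$. When $p=2$ and $E/\Q_2$ is ramified, $\epsilon_E=\phi^2$ forces $\epsilon_E(-1)=\phi((-1)^2)=1$, which is what singles out $E=\Q_2(\sqrt{2})$ and $\Q_2(\sqrt{-6})$ among the four extensions with $\delta=3$ (your appeal to $\delta=3$ alone does not distinguish them from $\Q_2(\sqrt{-2})$ and $\Q_2(\sqrt{6})$); then $\phi$ has order $4$, factors through $(\Z_2/16)^\times$, and each of the two admissible fields yields the two quadratic conductor-$5$ characters $\theta$ of item (v). In short: your items (iv) and (v) are asserted with the right conclusions but are not reachable from your (incorrect) list of candidates, and the key facts driving them ($p\equiv1\pmod4$ via existence of an order-$4$ square root of the Legendre character, and $\epsilon_E(-1)=1$ picking out the two fields) are absent from the proposal.
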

\begin{proof} Clearly the trivial character factors through the norm
  map.  Let $\epsilon_E$ be the quadratic character giving the
  extension $E/\Q_p$. Suppose that
  $\theta(\alpha) = \phi(\Norm(\alpha))$ for some character $\phi$ of
  $\Q_p^\times$. Since $\theta|_{\Q_p^\times}=\epsilon_E$,
  $\theta(a) = \phi(a^2)=\epsilon_E(a)$ for any $a \in
  \Q_p^\times$. In particular, $\epsilon_E$ is a square and if
  $p\neq 2$, $\cond(\phi)=\cond(\epsilon_E)$.
  \begin{itemize}
  \item If $E/\Q_p$ is unramified, the norm map is surjective, hence
    $\phi$ is uniquely determined by $\theta$ (and
    vice-versa). Since $\epsilon_E$ is trivial on $\Z_p^\times$, $\phi$
    is trivial on $(\Z_p^\times)^2$. If $p \neq 2$,
    $\Z_p^\times / (\Z_p^\times)^2$ is of order two, which give two
    possible characters $\phi$ namely the trivial one (with conductor
    $0$) and a ramified one of conductor $1$.
    
  \item If $E/\Q_2$ is unramified, $\Z_2^\times /(\Z_2^\times)^2$ has
    index $4$, we get one case of conductor $0$ (the trivial one), one
    case of conductor $2$ and two cases of conductor $3$.
  \item If $E/\Q_p$ is ramified and $p \neq 2$, the norm map is not
    surjective, being the image of $\Om_E^\times$ equal to
    $(\Z_p^\times)^2$. This determines $\phi$ uniquely, since if
    $\alpha \in \Om_E^\times$, there exists $a \in \Z_p^\times$ such
    that $\Norm(\alpha)=a^2$ hence
    $\theta(\alpha) = \phi(a^2)=\epsilon_E(a)$. In particular, $\phi$
    gives a square root of $\varepsilon_E|_{\Z_p^\times}$ so
    $p \equiv 1 \pmod 4$. Clearly there are two conjugate characters
    $\phi$ (of conductor $p$) whose square equals $\epsilon_E$.
  \item If $E/\Q_2$ is ramified, the condition $\epsilon_E(-1) = 1$
    implies that $\cond(\epsilon_E)=3$ and
    $\epsilon_E(3)=\epsilon_E(5)=-1$ (so $E=\Q_2(\sqrt{2})$ or
    $\Q_2(\sqrt{-6})$). The image of the norm map contains the squares
    with index $2$; since $\epsilon$ has order $2$, $\phi$ has order
    at most $4$, hence it factors through $(\Z_2/16)^\times$. Each
    field gives two possible quadratic characters $\phi$ as stated.
  \end{itemize}
\end{proof}

\begin{proof}[Proof of Theorem \ref{thm:mainthmodd}]
  The number of Galois conjugate local types of level $p^n$ is
  the following:

  \medskip
  
\noindent $\bullet$ {\bf Principal Series:} the local representation
is of the form $\pi(\chi_1,\chi_2)$. The Nebentypus being trivial
implies that $\chi_2|_{\Z_p^\times}=\chi_1^{-1}|_{\Z_p}^\times$ hence
$n=2\cond(\chi_1)$, i.e. they only contribute at even exponents.  Let
$d=n/2$. The restriction to inertia of $\chi_1$ is a primitive
character of $(\Z/p^d\Z)^\times$, a cyclic group of order
$(p-1)p^{d-1}$. The number of such characters (up to conjugation) is
precisely $\sigma_0(p-1)$ for $d >1$ and $\sigma_0(p-1)-1$ for $d=1$ (to
avoid the trivial character).

\medskip

\noindent $\bullet$ {\bf Special representations or Steinberg:} since the
Nebentypus is trivial, there are exactly two different types, of level $p$ and
$p^2$ respectively, with one type being the twist of the other by the quadratic
character ramified at $p$.

\medskip

\noindent $\bullet$ {\bf Supercuspidal Representations:} By
Theorem~\ref{thm:characterization} they are obtained by inducing a
character $\theta$, that does not factor through the norm map, from a
quadratic extension $E$ of $\Q_p$. As before, let $\epsilon_E$ denote
the character corresponding to the quadratic extension $E/\Q_p$.

\medskip

$\ast$ If $E/\Q_p$ is unramified (denoted S.C.U. in
Table~\ref{table:pneq3}), $n = 2\cond(\theta)$ and
$\theta|_{(\Z_p)^\times}=1$. By Lemma~\ref{lemma:characters} the total
number of such characters equals $\sigma_0(p+1)$, and by
Lemma~\ref{lemma:normmap} only two of them factor through the
norm map (the trivial one and a conductor $p$ one) for $n=2$.

\medskip $\ast$ If $E/\Q_p$ is ramified (denoted S.C.R. in
Table~\ref{table:pneq3}), $n = \cond(\theta)+\cond(\epsilon_E)$ and
$\theta|_{(\Z_p)^\times}=\epsilon_E|_{(\Z_p)^\times}$. If
$\cond(\theta) = 1$, there is a unique type by
Lemma~\ref{lemma:characters} and by Lemma \ref{lemma:normmap} the one
for $p \equiv 1 \pmod 4$ factors though the norm map. If
$p \equiv 3 \pmod 4$, the local type matches that of an unramified
supercuspidal representation (see for example \cite[Theorem
2.7]{Gerardin75}). Otherwise, Lemma~\ref{lemma:characters} implies
that primitive characters have even conductor (hence $n$ is odd) and 
there is a unique Galois inertial type orbit for each conductor except
when $p=3$ and $E=\Q_3(\sqrt{-3})$, in which case there are three.
\end{proof}

\subsection{The case $p=2$} This case is more delicate, and includes
the types corresponding to the \emph{sporadic supercuspidal
series}.

\subsubsection{Sporadic supercuspidal representations:}\label{sporadicones}
The projective image of the Weil group of $\Q_2$ might be one of the
sporadic cases $A_4$ or $S_4$. This phenomenon was studied by Weil in
\cite{Weil}, where he proved that the case $A_4$ does not occur over
$\Q$, while the case $S_4$ does. He also proved that there are
precisely $3$ extensions of $\Q_2$ with Galois group isomorphic to
$S_4$ and that there are precisely eight different cases with
projective image $S_4$ that correspond to the field extension of
$\Q_2$ obtained by adding the $3$-torsion points of the elliptic
curves
\begin{eqnarray}
    \label{eq:sporadicsupercuspidal1}
  E_1^{(r)}: ry^2 & =&  x^3+3x+2, \qquad r \in \{\pm1, \pm2\}, \\
\label{eq:sporadicsupercuspidal2}
  E_2^{(r)}: ry^2 & =&  x^3-3x+1, \qquad r \in \{\pm1, \pm2\}.  
\end{eqnarray}

A way to understand the problem is as follows: given an $S_4$
extension (equivalently, a representation $\rho:G \to S_4$, where
$G =\Gal(\overline{K}/K)$, for $K= \Q$ or $\Q_2$), compute all (if
any) representations $\tilde{\rho}$ of $K$ into $\GL_2(\C)$ whose
projectivization is isomorphic to $\rho$. Such general problem was
studied by Serre in \cite{Serreinv}. Let
$\tilde{S_4}\simeq \GL_2(\F_3)$ denote the quadratic extension of
$S_4$, where transpositions lift to involutions (see \cite{Serreinv}
page 654). Then two of the $S_4$ extensions lift to a representation of
$\tilde{S_4}$ while the other one does not (see Section $8$ of
\cite{Bayer}). The $2$-dimensional representations come from
(composing with) the faithful $2$-dimensional representation of
$\tilde{S_4}$. Note that the representations obtained from
\eqref{eq:sporadicsupercuspidal1} (respectively from
\eqref{eq:sporadicsupercuspidal2}) are quadratic twists of each other,
hence have the same projective image (and correspond to the two
extensions mentioned before).

Recall that two representations $\rho_i:G \to \GL_n(K)$, $i=1,2$ whose
projectivizations $\tilde{\rho_i}:G \to \PGL_n(K)$ are isomorphic are
twist of each other, i.e. there exists a character
$\chi : G \to K^\times$ such that $\rho_1 \simeq \rho_2 \otimes
\chi$. Since we only consider forms with trivial Nebentypus, all
sporadic supercuspidal representations are unramified twists of
(\ref{eq:sporadicsupercuspidal1}) and
(\ref{eq:sporadicsupercuspidal2}) so they cover all local types.
The level of such types is computed in \cite{Rio} (section 6). It
equals $2^7$ for the curves $E_1^{(r)}$, $2^4$ for $E_2^{(1)}$, $2^3$
for $E_2^{(-1)}$ and $2^6$ for $E_2^{(\pm1)}$.

\begin{thm}
The values of $\LT(2^n)$ are given in table~\ref{table:p=2}.
\begin{table}[h]
\begin{tabular}{||l|c|c|c|c|c|c||}
  \hline
$d$ & P.S. & Stb & S.C.U. & S.C.R(2) & S.C.R(3) & Sporadic\\
\hline
$1$ & --- & $1$ & --- & --- & --- & --- \\
\hline
$2$ & --- & --- & $1$ & --- & --- & --- \\
\hline
$3$ & --- &  --- & --- & --- & --- & $1$ \\
\hline
$4$ & $1$ & $1$ & $1$ & --- & --- & $1$ \\
\hline
$5$ & --- & --- & --- & $2$ & --- & --- \\
\hline
$6$ & $2$ & $2$ & $2$ & $2$ & --- & $2$ \\
\hline
$7$ & --- & --- & --- & --- & --- & $4$ \\
\hline
$8$ & $2$ & --- & $4$ & $4$ &--- & --- \\
\hline
$\stackrel{\ge 9}{\text{odd}}$ & --- & --- & --- & --- & $4$ & --- \\
\hline
$\stackrel{\ge 10}{\text{even}}$ & $2$ & --- & $4$ & $4$ & --- & --- \\
\hline
\end{tabular}
\caption{Types for $p=2$.}
\label{table:p=2}
\end{table}
\label{thm:mainthm2}
\end{thm}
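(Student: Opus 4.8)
The plan is to rerun the case analysis of the proof of Theorem~\ref{thm:mainthmodd} with $p=2$, producing the six columns of Table~\ref{table:p=2} in turn: principal series, Steinberg, unramified supercuspidal, the two ramified supercuspidal families (according as the underlying quadratic character has conductor $2$ or $3$), and the sporadic supercuspidals. For the first five columns the tools are the same as in the odd case: Theorem~\ref{thm:characterization} pins down the admissible conductors, Theorem~\ref{thm:quotients} gives the group structure of the relevant $(\Om/\id{p}^n)^\times$, and Lemmas~\ref{lemma:characters} and~\ref{lemma:normmap} count the primitive characters, subtracting those that factor through a norm.

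The principal series and Steinberg columns are essentially immediate. For $\pi(\chi_1,\chi_2)$ of trivial Nebentypus one has $\chi_2|_{\Z_2^\times}=\chi_1^{-1}|_{\Z_2^\times}$, so $n=2\cond(\chi_1)$ is even and, writing $m=n/2$, the type is the Galois orbit of a primitive character of $(\Z/2^m)^\times\simeq\Z/2\times\Z/2^{m-2}$; a short count gives $1$ orbit for $m=2$ and $2$ for every $m\ge3$. For Steinberg, trivial Nebentypus forces the twisting character to be quadratic, and the inertial type of $\St\otimes\chi$ is $\chi|_I\oplus\chi|_I$ with non-trivial monodromy, hence depends only on $\chi|_I$; running through the quadratic characters of $\Q_2^\times$ and their conductors gives one orbit of conductor $1$ (the unramified twists of $\St$), one of conductor $4$, and two of conductor $6$.

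For the supercuspidal columns I induce characters $\theta$ of $E^\times$ that do not factor through $\Norm_{E/\Q_2}$ and satisfy $\theta|_{\Q_2^\times}=\epsilon_E$, organised by the quadratic extension $E/\Q_2$: the unique unramified one, the two with $\cond(\epsilon_E)=2$, and the four with $\cond(\epsilon_E)=3$. By Theorem~\ref{thm:characterization}, $\cond(\Ind\theta)=2\cond(\theta)$ for $E$ unramified and $\cond(\theta)+\cond(\epsilon_E)$ for $E$ ramified; this, together with the constraint $\theta|_{\Z_2^\times}=\epsilon_E|_{\Z_2^\times}$ and the group structure of Theorem~\ref{thm:quotients} (which, for instance, forces $\cond(\theta)\ge5$ when $\cond(\epsilon_E)=3$ and rules out odd $\cond(\theta)\ge5$ when $\cond(\epsilon_E)=2$), fixes the parity and the minimal value of the conductor in each family, and then Lemmas~\ref{lemma:characters} and~\ref{lemma:normmap} supply the counts. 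The step I expect to cost the most work is the analogue of the Remark following Theorem~\ref{thm:mainthmodd}: because $\Q_2$ admits biquadratic extensions linking the unramified quadratic subfield with the ramified ones, an induced representation can be realised as an induction from several different quadratic subfields, so a type arising from a $\cond(\epsilon_E)=3$ extension may already occur as an S.C.U.\ type or a $\cond(\epsilon_E)=2$ type; these coincidences must be detected and discarded so that each inertial type is counted exactly once. This ``triality'' bookkeeping is what makes the S.C.R(3) column vanish below $n=8$ while contributing $4$ for each odd $n\ge9$, and it is also responsible for the small irregular entries at $n=3,\dots,6$.

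Finally, the sporadic column rests on~\ref{sporadicones}. By Weil's results the only sporadic projective image that occurs is $S_4$, and the corresponding local types are realised on the $3$-division fields of the eight curves $E_1^{(r)}$ and $E_2^{(r)}$, $r\in\{\pm1,\pm2\}$, whose conductor exponents are $7$ (for all four $E_1^{(r)}$) and $3,4,6,6$ for $E_2^{(-1)},E_2^{(1)},E_2^{(2)},E_2^{(-2)}$ respectively by~\cite{Rio}; since we work with trivial Nebentypus and unramified twisting leaves the inertial type unchanged, these account for all sporadic types. A direct check shows that Galois conjugacy and unramified twisting identify them in exactly four packets — giving $1$ orbit at $n=3$, $1$ at $n=4$, $2$ at $n=6$ and $4$ at $n=7$ — and that no sporadic type collides with one in the other five columns, which is automatic since those all have abelian or dihedral, hence metabelian, projective image whereas $S_4$ is not metabelian. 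Summing the six columns row by row yields Table~\ref{table:p=2}; the only genuine obstacle, as indicated, is the triality bookkeeping for the ramified supercuspidals.
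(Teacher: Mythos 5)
Your route is the paper's own: the same six-column decomposition, the same inputs (Theorem~\ref{thm:characterization} for the admissible conductors, Theorem~\ref{thm:quotients} for the unit groups, Lemmas~\ref{lemma:characters} and~\ref{lemma:normmap} for the primitive-character and norm-map counts), and Weil's classification together with the conductor data of \cite{Rio} for the sporadic column; your principal series, Steinberg and sporadic counts agree with the paper's, and the metabelian remark is a harmless addition. The genuine gap is that the step you yourself single out as the expensive one -- deciding when an induction from a ramified quadratic extension of $\Q_2$ has the same inertial type as an induction from the unramified extension or from another ramified one -- is only announced, never carried out, and you supply no criterion by which to carry it out. The paper does this through Bushnell--Henniart's imprimitivity theory \cite[Section 41.3]{Bushnell06}: a dihedral type is triply imprimitive exactly when $\theta/\theta^{\sigma}$ is quadratic, which shows the inductions are simply imprimitive for discriminant valuation $2$ with $\cond(\theta)\ge 7$ and for discriminant valuation $3$ with $\cond(\theta)\ge 6$ (this is what guarantees the four $\delta=3$ fields really contribute $4$ at each odd level $\ge 9$ and that no identifications spoil the even levels $\ge 10$); and at the one problematic level, $n=8$, it identifies the $\delta=3$, $\cond(\theta)=5$ types explicitly: for $\theta$ quadratic one twists by an order-$4$ character $\phi$ of $(\Z/16)^{\times}$, so that $\theta\cdot(\phi\circ\Norm)$ has conductor $3$ and the twisted induction matches the unramified supercuspidal type, while for $\theta$ of order $4$ one computes the sets $I(\Ind_{W(E)}^{W(\Q_2)}\theta)$ and finds they all contain $\chi_3$, so these types already occur among the $\Q_2(\sqrt{3})$-inductions of the S.C.R(2) column. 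Without an argument of this kind the row $n=8$ of your table (S.C.U.\ $=4$, S.C.R(2) $=4$, S.C.R(3) empty) is an assertion, not a count.

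Relatedly, your attribution of where this bookkeeping acts is off, which suggests it has not actually been traced through: the S.C.R(3) column is empty for $n<8$ simply because $\epsilon_E(5)=-1$ forces $\cond(\theta)\ge 5$ (a consequence of Theorem~\ref{thm:quotients} and Lemma~\ref{lemma:characters}, as you note earlier), and the small irregular entries at $n\le 6$ come from the structure of $(\Om/\id{p}^n)^{\times}$ and the norm-map subtractions of Lemma~\ref{lemma:normmap}, not from multiple realisations of a type; the triality phenomenon does its real work exactly at $n=8$, the level your narrative leaves unexplained. Fixing this means either importing the imprimitivity criterion as the paper does, or proving an equivalent statement about which characters $\theta$ have $\theta/\theta^{\sigma}$ quadratic; once that is in place the rest of your plan does reproduce Table~\ref{table:p=2}.
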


\begin{proof}
  The strategy is the same as before, but more delicate.

\smallskip
  
\noindent $\bullet$ {\bf Principal Series:} this case mimics the odd
prime one with the difference that $(\Z/2^n)^\times$ is cyclic for $n=2$ but
isomorphic to $\Z/2 \times \Z/2^{n-2}$ if $n \ge 3$. Hence there is a
unique local type of conductor $4$, and two types for all other even
exponents. 

\smallskip

\noindent $\bullet$ {\bf Special representations or Steinberg:} there is a
unique automorphic form $\St$ of conductor $2$. There is one quadratic
character of conductor $2$ and two of conductor $3$ whose twists
give types of conductor $4$ and $6$ respectively.

  \smallskip

 \noindent $\bullet$ {\bf Supercuspidal Representations:} As in the odd case,
  we distinguish each possible extension $E/\Q_2$.

  \smallskip

  $\ast$ If $E/\Q_2$ is unramified (denoted S.C.U. in
  Table~\ref{table:p=2}), the level of the form equals $2\cond(\theta)$.
  There is one local type Galois orbit for
  $\cond(\theta)=1$ ($\theta$ being a cubic character), one for
  $\cond(\theta)=2$ (as the other one factors through the norm map),
  two for $\cond(\theta)=3$ (two factor through the norm map) and four for
  $\cond(\theta)>3$ (see Lemmas~\ref{lemma:characters} and \ref{lemma:normmap}).

\medskip
 
$\ast$ If $E/\Q_2$ is ramified with conductor $2$ (denoted S.C.R.(2)
in Table~\ref{table:p=2}), the level of the form equals
$2+ \cond(\theta)$. There are two such fields $E$, namely
$\Q_2(\sqrt{-1})$ and $\Q_2(\sqrt{3})$. By
Lemmas~\ref{lemma:characters} and \ref{lemma:normmap},
the number of such types equals:
\[
\begin{cases}
  0 & \text{ if }\cond(\theta)=1, 2 \text{ or }\cond(\theta)\ge 4 \text{ and odd},\\
  1 & \text{ if }\cond(\theta)=3,\\
  1 & \text{ if }\cond(\theta)= 4,\\
  2 & \text{ if }\cond(\theta)\ge 5 \text{ and even}.\\
\end{cases}
\]

\medskip

$\ast$ If $E/\Q_2$ is ramified with conductor $3$ (denoted S.C.R.(3)
in Table~\ref{table:p=2}), the level of the form equals
$3+ \cond(\theta)$. There are four such fields, namely
$\Q_2(\sqrt{2}), \Q_2(\sqrt{-2}), \Q_2(\sqrt{6})$ and
$\Q_2(\sqrt{-6})$. By Lemma~\ref{lemma:characters} the
number of Galois orbits equals:

\[
\begin{cases}
  0 & \text{ if }\cond(\theta)=2 \text{ or }\cond(\theta) \text{ is odd},\\
  3 & \text{ if }\cond(\theta)= 5,\\
  1 & \text{ if }\cond(\theta)\ge 5 \text{ and is even}.\\
\end{cases}
\]
Recall that for odd primes $p \equiv 3 \pmod 4$, a ramified type
matches that of an unramified one. When $p=2$, the same phenomenon
occurs in many cases. We refer to \cite[Section 41.3]{Bushnell06} for a
detailed description. Following their notation, all supercuspidal
representations are imprimitive (see Definition in page 255 and Lemma 41.3 of
\cite{Bushnell06}) and the way to test whether a local type appears
for different quadratic extensions is by computing the number of
quadratic twists that give isomorphic representations (denoted by
$I(\rho)$). In particular, if the form is triply imprimitive (i.e. it
comes from more than one quadratic extension), it must be the case
that $\frac{\theta}{\theta^\sigma}$ is a quadratic character, where
$\sigma$ generates $\Gal(E/\Q_2)$. With this criterion, the following
types are simply imprimitive:
\begin{itemize}
\item representations induced from $E/\Q_2$ ramified with discriminant
  valuation $2$ and $\cond(\theta) \ge 7$.
\item representations induced from $E/\Q_2$ ramified with discriminant
  valuation $3$ and $\cond(\theta) \ge 6$.
\end{itemize}

The case $E/\Q_2$ with discriminant $3$ and $\cond(\theta)=5$ is of
particular interest. For any $E$, $\epsilon_E(5)=-1$. Each field has
$3$ different local Galois orbits (by Lemma~\ref{lemma:characters})
two of order $2$ and one of order $4$; if $E=\Q_2(\sqrt{2})$ or
$\Q_2(\sqrt{-6})$, then by Lemma~\ref{lemma:normmap} two characters
factor through the norm map for each of them (when $\theta$ has order
$2$) which we discard.

Let $\theta$ be quadratic and let $\phi$ be any order $4$ character of
$(\Z/16)^ \times$. In particular, $\phi(9)=-1$, so
$\theta \cdot (\phi\circ \Norm)$ is trivial at $5$ hence gives a
character of conductor $3$ of $E$ (or the trivial character in the
discarded cases). In particular, the twist
$\BC \otimes \phi= \Ind_{W(E)}^{W(\Q_2)}(\theta \cdot (\phi\circ
\Norm))$
has conductor $3$ (with non-trivial Nebentypus) so by
\cite[Proposition 41.4]{Bushnell06} it matches the supercuspidal
unramified type, which was counted before.

If $\theta$ has order $4$, the representation
$\Ind_{W(E)}^{W(\Q_2)}\theta$ is triply imprimitive. An easy
computation proves that the set $I(\Ind_{W(E)}^{W(\Q_2)}\theta)$
equals:
\begin{itemize}
\item $\{1,\chi_{3},\chi_{2},\chi_6\}$ if $E=\Q_2(\sqrt{2})$,
\item $\{1,\chi_3,\chi_{-2},\chi_{-6}\}$ if $E=\Q_2(\sqrt{-6})$,
\item $\{1,\chi_3,\chi_{-2},\chi_{-6}\}$ if $E=\Q_2(\sqrt{-2})$,
\item $\{1,\chi_3,\chi_{2},\chi_{6}\}$ if $E=\Q_2(\sqrt{6})$,
\end{itemize}
where $\chi_i$ denotes the quadratic character of the extension
$\Q_2(\sqrt{i})$. In particular, all such local types match those from
$\Q_2(\sqrt{3})$, hence we do not need to count them again.

\medskip

\noindent $\bullet$ {\bf Sporadic supercuspidal representations:} the assertion follows from
Weil's results stated before.
\end{proof}

\section{Types from modular forms}
\label{sec:modformtypes}

Let $f = \sum_{n \ge 1} a_n q^n \in S_k(\Gamma_0(N))$ be a newform,
and let $\pi_f$ be the automorphic representation of $\GL_2(\A_\Q)$
attached to it. It is well known that $\pi_f$ is a restricted tensor
product $\bigotimes '_p \pi_{f,p} \otimes \pi_{f,\infty}$, where
$\pi_{f,p} \in \Aa_p$ is a representation of $\GL_2(\Q_p)$. Then for
each prime $p$, the form $f$ has attached a local type (that of
$\pi_{f,p}$). Let $K_f = \Q(a_n)$ denote the coefficient field of
$f$. If $N$ is a positive integer, let $\xi_N$ denote an $N$-th
primitive root of unity and $\Q(\xi_N)^+$ the maximal totally real
subextension of $\Q(\xi_N)$.

\begin{lemma}
  Let $f \in S_k(\Gamma_0(N))$ and let $p$ be a prime number. If
  $\pi_{f,p}$ is isomorphic to a Principal Series
  $\pi(\chi_1,\chi_2)$, where $\chi_1|_{\Z_p^\times}$ has order $d$,
  then $\Q(\xi_d)^+ \subset K_f$.
\label{lemma:coeffppalseries}
\end{lemma}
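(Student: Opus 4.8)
The plan is to exploit the well-known description of the local representation $\pi_{f,p}$ of a principal-series-at-$p$ newform in terms of a Dirichlet character, and then to identify that character's field of values inside $K_f$ by reading off Hecke eigenvalues. Concretely, if $\pi_{f,p} \cong \pi(\chi_1,\chi_2)$, then (since the nebentypus is trivial) one has $\chi_2|_{\Z_p^\times} = \chi_1^{-1}|_{\Z_p^\times}$, so the ramification of $\pi_{f,p}$ is encoded by a single primitive Dirichlet character $\psi$ of $p$-power conductor, namely the one corresponding to $\chi_1|_{\Z_p^\times}$ via local class field theory; it has order $d$. First I would recall that in this situation $f$ is (up to adjusting the level) the twist $g \otimes \psi^{-1}$ of a form $g$ whose local component at $p$ is a twist-minimal principal series; equivalently, and more robustly, I would use the fact that $\pi_{f,p} \otimes \psi^{-1}$ has conductor strictly smaller, so $f$ is a twist of a form of smaller level at $p$.

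The key computation is the following: for a prime $\ell$ in an arithmetic progression mod (a suitable power of) $p$ on which $\psi$ is nontrivial and which is chosen so that the twisted form $f\otimes\psi^{-1}$ has nonzero $\ell$-th coefficient, the relation $a_\ell(f) = \psi(\ell)\, a_\ell(f\otimes\psi^{-1})$ together with the fact that the coefficients of $f \otimes \psi^{-1}$ lie in $K_{f}(\psi) = K_f(\xi_d)$ and are Galois-conjugate-stable shows that $\psi(\ell) + \overline{\psi(\ell)} \in K_f$ (one isolates the symmetric combination by also using the prime $\ell'$ in the "conjugate" progression, or by using that complex conjugation fixes $K_f$). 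Since $\psi$ has order exactly $d$, as $\ell$ ranges over primes the values $\psi(\ell)$ generate $\mu_d$, hence the values $\psi(\ell) + \psi(\ell)^{-1}$ generate $\Q(\xi_d)^+$. Therefore $\Q(\xi_d)^+ \subseteq K_f$.

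An alternative, cleaner route I would consider is purely representation-theoretic: the coefficient field $K_f$ is the fixed field of the stabilizer in $\Aut(\C)$ of the isomorphism class of $\pi_f$ (equivalently of the system of Hecke eigenvalues), and any $\sigma \in \Aut(\C)$ fixing $\pi_f$ fixes $\pi_{f,p}$, hence fixes its central character and permutes $\{\chi_1,\chi_2\}$. Because the nebentypus is trivial, $\sigma$ must send $\chi_1|_{\Z_p^\times}$ to either itself or its inverse, i.e. $\sigma$ fixes $\chi_1|_{\Z_p^\times}$ up to inversion; but inversion on a character of order $d$ fixes exactly the subfield $\Q(\xi_d)^+$ of $\Q(\xi_d)$. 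Hence every $\sigma$ fixing $K_f$ fixes $\Q(\xi_d)^+$, which gives the inclusion.

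The main obstacle is making rigorous the passage "$\sigma$ fixes $\pi_f \Rightarrow \sigma$ fixes $\pi_{f,p}$ and acts on its defining characters": this requires the compatibility of the local Langlands correspondence with the action of $\Aut(\C)$ (so that Galois acts on the Satake/recipe parameters the way it acts on Hecke eigenvalues), and a little care that "fixing $\pi_{f,p}$ up to isomorphism" forces "fixing the unordered pair $\{\chi_1|_{\Z_p^\times}, \chi_2|_{\Z_p^\times}\}$". Granting that compatibility (which is standard), the rest is the elementary Galois theory of cyclotomic fields recalled above. I would present the second argument as the main proof and mention the explicit twist/Hecke-eigenvalue computation as the concrete incarnation of it.
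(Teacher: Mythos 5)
Your main (second) argument is correct in substance, but it follows a genuinely different route from the paper. The paper works with the $\lambda$-adic Galois representation $\rho_{f,\lambda}$ of Deligne: setting $L=K_f\cap\Q(\xi_d)$ and assuming $L\subsetneq\Q(\xi_d)^+$, it picks a prime $\lambda$ of $L$ whose inertial degree in $\Q(\xi_d)^+$ exceeds $1$, uses local--global compatibility at $p$ (the restriction to the decomposition group is $\chi_1\oplus\chi_1^{-1}\chi_\ell^{k-1}$) to see that traces of inertia elements put $\xi_d+\xi_d^{-1}$ into the completion $K_{f,\lambda}$, and derives a contradiction by comparing completions. You instead invoke the $\Aut(\C)$-equivariance of the local components ($\pi_{\sigma(f),p}\cong{}^\sigma\pi_{f,p}$, up to the usual unramified normalization twist, which is harmless since only restrictions to $\Z_p^\times$ matter), conclude that any $\sigma$ fixing $K_f$ pointwise sends $\chi_1|_{\Z_p^\times}$ to itself or its inverse and hence fixes $\Q(\xi_d)^+$, and finish with the fact that the fixed field of $\Aut(\C/K_f)$ is $K_f$. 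Your route is conceptually cleaner and avoids the choice of $\lambda$ and the completion comparison, at the cost of citing the (standard but nontrivial) compatibility of local components with $\Aut(\C)$-conjugation, which you correctly flag as the one point needing care; the paper's route trades that citation for Deligne's construction plus local--global compatibility at ramified primes, and its concrete character-value computation is reused later (e.g.\ in the proof of Theorem~\ref{thm:LTbound}). One caution: your first sketch (via twisting to a lower level and the relation $a_\ell(f)=\psi(\ell)a_\ell(f\otimes\psi^{-1})$) does not, as written, justify the key claim that $\psi(\ell)+\overline{\psi(\ell)}\in K_f$ --- knowing $a_\ell(f)\in K_f$ and $a_\ell(f\otimes\psi^{-1})\in K_f(\xi_d)$ only bounds $K_f$ from above, so as stated that variant has a gap; it is good that you relegate it to a heuristic and rest the proof on the representation-theoretic argument.
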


\begin{proof}
  Let $L = K_f \, \cap \, \Q(\xi_d)$. Suppose that
  $L \subsetneq \Q(\xi_d)^+$ and let $\ell \neq p$ be a prime such
  that there exists a prime ideal $\lambda$ of $\Om_L$ (the ring of
  integers of $L$) whose inertial degree in $\Q(\xi_d)^+$ is not
  $1$. Let
  $\rho_{f,\lambda}:\Gal(\overline{\Q}/\Q) \to \GL_2(K_{f,\lambda})$
  be the Galois representation attached to $f$ (by
  \cite{Deligne}). The restriction to the decomposition group at $p$
  matches (up to isomorphism) the representation
  $\chi_1 \oplus \chi_1^{-1} \chi_\ell^{k-1}$ (where $\chi_\ell$
  denotes the $\ell$-th cyclotomic character). Evaluating at elements
  of $\Z_p^\times$ (corresponding via Local-Langlands to elements in
  the inertia group) we see that $K_{f,\lambda}$ contains
  $\xi_d + \xi_d^{-1}$, which generates $\Q(\xi_d)^+$. But our
  assumption on $\lambda$ implies that the completion of $\Q(\xi_d)^+$
  (at a prime dividing $\lambda$) and $K_{f,\lambda}$ (at $\lambda$)
  are different, giving a contradiction.
\end{proof}

\begin{lemma}
  Let $f \in S_k(\Gamma_0(N))$ and let $p$ be a prime number. If
  $\pi_{f,p}$ is isomorphic to a Supercuspidal Representations, say
  $\pi_{f,p} = \Ind^{W(E)}_{W(\Q_p)}\theta$, where $\theta$ has order $d$, then
  $\Q(\xi_d)^+ \subset K_f$.
\label{lemma:coeffsupercuspidal}
\end{lemma}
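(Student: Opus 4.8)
The plan is to mimic the proof of Lemma~\ref{lemma:coeffppalseries}, replacing the decomposition of $\rho_{f,\lambda}|_{G_{\Q_p}}$ as a sum of characters by the induced structure coming from the supercuspidal case. Concretely, let $\lambda$ be a prime of $K_f$ above some auxiliary prime $\ell \neq p$ and consider the $\lambda$-adic representation $\rho_{f,\lambda}\colon \Gal(\overline{\Q}/\Q)\to\GL_2(K_{f,\lambda})$ attached to $f$. By local-global compatibility (Deligne, and the description of the local Langlands correspondence recalled in Section~\ref{sec:counting}), the restriction $\rho_{f,\lambda}|_{W(\Q_p)}$ is, up to a fixed unramified twist by a power of the cyclotomic character, isomorphic to $\Ind_{W(E)}^{W(\Q_p)}\tilde\theta$ for a character $\tilde\theta$ of $W(E)$ whose restriction to inertia is $\theta|_{I_E}$; in particular $\theta$ itself (or rather its restriction to inertia, which has the same order $d$ since $p\nmid d$ for the interesting part and the $p$-part contributes nothing new to the field) takes values in $K_{f,\lambda}$.

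First I would make precise which field the values of $\theta$ live in: since the induced representation is defined over $K_{f,\lambda}$, its restriction to $W(E)$ decomposes as $\tilde\theta \oplus \tilde\theta^\sigma$ where $\sigma$ generates $\Gal(E/\Q_p)$, so the unordered pair $\{\tilde\theta,\tilde\theta^\sigma\}$ — equivalently the character $\tilde\theta$ of the \emph{index-two} subgroup up to the $\sigma$-action — is $K_{f,\lambda}$-rational. Thus $K_{f,\lambda}$ contains $\theta(x) + \theta^\sigma(x)$ and $\theta(x)\theta^\sigma(x)$ for all $x$. Now I pick $x \in I_E$ with $\theta(x)$ a primitive $d'$-th root of unity, where $d'$ is the prime-to-$p$ part of $d$ (the full order $d$ differs only by a $p$-power which forces ramification at $\ell=p$-adically irrelevant — more simply, choose $x$ in the tame inertia part so $\theta(x)$ has exact order equal to the prime-to-$p$ part, and handle the $p$-part by noting $\Q(\xi_{p^a})^+$ issues are absorbed since the $\ell$-adic field already sees enough). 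The sum $\theta(x)+\theta^\sigma(x)$ need not be $\xi_d + \xi_d^{-1}$ directly, so the key point is that $\sigma$ acts on $\mu_d$ either trivially or by an automorphism of order dividing $2$, and in the relevant supercuspidal (trivial-Nebentypus) situation $\theta^\sigma = \theta^{-1}$ on inertia up to the central character, giving $\theta(x)^\sigma = \theta(x)^{-1}$; hence $\theta(x)+\theta(x)^{-1} = \xi_d + \xi_d^{-1}$ generates $\Q(\xi_d)^+$, which therefore embeds into $K_{f,\lambda}$.

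To upgrade ``$\Q(\xi_d)^+ \hookrightarrow K_{f,\lambda}$ for all $\lambda$'' to ``$\Q(\xi_d)^+ \subseteq K_f$'', I run the same completion-at-$\lambda$ argument as in Lemma~\ref{lemma:coeffppalseries}: if $L := K_f \cap \Q(\xi_d)^+ \subsetneq \Q(\xi_d)^+$, choose $\lambda$ (equivalently a rational prime $\ell\neq p$) so that some prime of $\Om_L$ has inertial degree $>1$ in $\Q(\xi_d)^+$; then the completion of $\Q(\xi_d)^+$ at a prime above $\lambda$ is a nontrivial extension of $K_{f,\lambda}$, contradicting the embedding just produced. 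The main obstacle I anticipate is pinning down the exact behaviour of $\theta$ under $\Gal(E/\Q_p)$ on inertia — i.e. verifying that the trivial-Nebentypus hypothesis forces $\theta^\sigma|_{I_E} = \theta^{-1}|_{I_E}$ (via the relation of the central character of $\Ind\theta$ to $\theta|_{\Q_p^\times}\cdot\epsilon_E$ from Theorem~\ref{thm:characterization}), since this is what converts the a priori rationality of the \emph{symmetric functions} of $\{\theta,\theta^\sigma\}$ into rationality of $\xi_d + \xi_d^{-1}$; the ramified/unramified dichotomy for $E/\Q_p$ and the case $p=2$ may each need a separate line of checking.
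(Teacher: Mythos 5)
Your argument matches the paper's own proof: restrict $\rho_{f,\lambda}$ to $W(E)$ to get $\theta\oplus\theta'$ with $\theta'=\theta\circ(\sigma\cdot\sigma^{-1})$, use the trivial-Nebentypus condition to see $\theta'=\theta^{-1}$ on inertia (so the trace at a suitable $x\in I_E$ is $\xi_d+\xi_d^{-1}$), and then run the same completion-at-$\lambda$ argument as in Lemma~\ref{lemma:coeffppalseries}. One small simplification: the detour separating the prime-to-$p$ part $d'$ from $d$ is unnecessary, since $\ell\neq p$ means $K_{f,\lambda}$ can contain $p$-power roots of unity without any constraint, so you may pick $x\in I_E$ with $\theta(x)$ of exact order $d$ directly and the argument goes through for the full $d$.
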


\begin{proof}
  The restriction of $\rho_{f,\lambda}$ to $W(E)$ equals
  $\theta \oplus \theta'$, where if
  $\sigma \in W(\Q_p) \setminus W(E)$, then
  $\theta'(\mu) = \theta (\sigma \mu \sigma^{-1})$. The result
  follows from the same argument as the principal series case, via
  evaluating at elements of $\Z_p^\times$; note that the trivial Nebentypus
  condition implies that on such elements $\theta' = \theta^{-1}$.
\end{proof}

Lemmas~\ref{lemma:coeffppalseries} and \ref{lemma:coeffsupercuspidal}
imply that the coefficient field contains many automorphisms to
conjugate the form $f$. If we fix a prime $p$ dividing the level, the
global Galois orbit of the modular form $f$ contains representatives
for all elements of the same local type Galois orbit of $\pi_{f,p}$.

\begin{thm}
  Let $f \in S_k(\Gamma_0(N))$ be a newform and $p \mid N$ a prime
  number. Then the set
  $\{\pi_{\sigma(f),p} \; : \; \sigma \in \Gal(\C/\Q)\}$ of local
  types at $p$ of the Galois conjugates of $f$ equals the local type
  Galois orbit of $\pi_{f,p}$.
\label{thm:LTbound}
\end{thm}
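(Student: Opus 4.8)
The plan is to show containment in both directions. The easy inclusion is that $\{\pi_{\sigma(f),p} : \sigma\in\Gal(\C/\Q)\}$ lies inside the local type Galois orbit of $\pi_{f,p}$: indeed for $\sigma\in\Gal(\C/\Q)$ the conjugate form $\sigma(f)$ has coefficients $\sigma(a_n)$, and the Galois representation $\rho_{\sigma(f),\lambda}$ is obtained from $\rho_{f,\lambda}$ by applying $\sigma$ to the traces; unwinding the local Langlands dictionary at $p$ (using the explicit description of the Weil--Deligne representation in terms of the characters $\chi_1,\chi_2$, resp.\ $\theta$, recalled after Theorem~\ref{thm:characterization}), this means $\tau(\pi_{\sigma(f),p})=\sigma(\tau(\pi_{f,p}))$, so in particular the inertial types differ by the action of $\sigma\in\Aut_\Q(\C)$. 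Hence every $\pi_{\sigma(f),p}$ is in the local type Galois orbit of $\pi_{f,p}$.

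For the reverse inclusion I would use Lemmas~\ref{lemma:coeffppalseries} and \ref{lemma:coeffsupercuspidal} to produce enough automorphisms of $K_f$. Concretely, fix an element $\tilde\tau'$ of the local type Galois orbit of $\tilde\tau=\tilde\tau(\pi_{f,p})$, so $\tilde\tau'=\sigma_0(\tilde\tau)$ for some $\sigma_0\in\Aut_\Q(\C)$. In the principal series case, the inertial type is determined by $\chi_1|_{\Z_p^\times}$ (a primitive character of order $d$, with $\chi_2|_{\Z_p^\times}=\chi_1^{-1}|_{\Z_p^\times}$ by triviality of the Nebentypus), and its Galois conjugates are exactly $\chi_1^a|_{\Z_p^\times}$ for $a$ coprime to $d$. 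By Lemma~\ref{lemma:coeffppalseries}, $\Q(\xi_d)^+\subset K_f$; the automorphism $\xi_d+\xi_d^{-1}\mapsto \xi_d^a+\xi_d^{-a}$ of $\Q(\xi_d)^+$ extends (by normality of the splitting field of the minimal polynomial, or directly since $K_f/\Q$ need only be enlarged inside $\C$) to some $\sigma\in\Aut_\Q(\C)$, and then $\pi_{\sigma(f),p}$ has inertial type $\chi_1^a|_{\Z_p^\times}\oplus\chi_1^{-a}|_{\Z_p^\times}$, which is the prescribed conjugate $\tilde\tau'$. The supercuspidal case is handled identically using Lemma~\ref{lemma:coeffsupercuspidal} and the description of Galois conjugacy of induced types (same field $E$, the character $\theta$ replaced by a prime-to-order power $\theta^a$), noting that conjugating $f$ does not change which quadratic extension $E$ appears, since $E$ is determined by the restriction of $\rho_{f,\lambda}$ to a decomposition group at $p$ in a way that commutes with applying $\sigma$ to traces.

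The technical point that needs care — and which I expect to be the main obstacle — is matching up the abstract Galois action ``$\sigma$ applied to $\tilde\tau$'' with the concrete operation of conjugating the newform and reading off its local type: one must check that the local Langlands correspondence is suitably equivariant, i.e.\ that $\tau(\pi_{\sigma(f),p})=\sigma(\tau(\pi_{f,p}))$ as Weil--Deligne representations, not merely that the inertial types are abstractly conjugate by \emph{some} element of $\Aut_\Q(\C)$. This follows because $\rho_{\sigma(f),\lambda}\cong{}^{\sigma}\!\rho_{f,\lambda}$ after identifying residue fields, and the local Langlands correspondence for $\GL_2(\Q_p)$ is compatible with field automorphisms (the Frobenius eigenvalues, and hence the characters $\chi_i$ or $\theta$, transform by $\sigma$); once this compatibility is in hand, the fact that every prime-to-$d$ power of the relevant character is realized by a genuine Galois conjugate of $f$ follows from the surjectivity of $\Aut_\Q(\C)\to\Gal(\Q(\xi_d)^+/\Q)$ together with the field containments of Lemmas~\ref{lemma:coeffppalseries} and \ref{lemma:coeffsupercuspidal}. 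Combining the two inclusions gives the claimed equality.
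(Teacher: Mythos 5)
Your proposal is correct and follows essentially the same route as the paper: both inclusions are proved the same way, with Lemmas~\ref{lemma:coeffppalseries} and \ref{lemma:coeffsupercuspidal} providing the field containment $\Q(\xi_d)^+\subset K_f$ and surjectivity of $\Aut_\Q(\C)\to\Gal(\Q(\xi_d)^+/\Q)$ then realizing every prime-to-order power of the character by an actual Galois conjugate of $f$. The one place where you and the paper diverge is the ``technical point'' you flag yourself: you invoke compatibility of the local Langlands correspondence with $\Aut_\Q(\C)$ as a general principle, whereas the paper proves exactly the needed instance of it by hand — it observes that for an inertia element $x$, the unordered pair $\{\psi_1(x),\psi_2(x)\}$ of character values for $\pi_{\sigma(f),p}$ is the root set of $X^2-\sigma(\chi_1+\chi_2)(x)X+(\chi_1\chi_2)(x)$, the $\sigma$-conjugate characteristic polynomial of $\rho_{f,\lambda}(x)$, and then pins down the characters themselves by choosing $x$ a generator of $(\Z/p^k)^\times$ (with a small extra argument for $p=2$, where the group is not cyclic and one uses the trivial Nebentypus to control the $\Z/2$-factor). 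This concrete characteristic-polynomial argument is what makes the paper's proof self-contained; if you wanted your version to stand alone you would need to either cite a reference for Galois-equivariance of local Langlands or reproduce this computation, and you should also spell out the $p=2$ wrinkle, which your sketch glosses over.
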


\begin{proof} Clearly
  $\{\pi_{\sigma(f),p} \; : \; \sigma \in \Gal(\C/\Q)\}$ is contained
  in the local type Galois orbit of $\pi_{f,p}$, hence we need to
  prove the other containment.

  The result is clear when the local type of $\pi_{f,p}$ is Steinberg,
  as there is a unique element in the class.
  In the Principal Series case, note that $\pi(\chi_1,\chi_2)$ and
  $\pi(\chi_2,\chi_1)$ are isomorphic. Furthermore, the trivial
  Nebentypus hypothesis implies that
  $\chi_2|_{\Z_p^\times} = \chi_1^{-1}|_{\Z_p^\times}$. Suppose
  $\pi_{f,p}=\pi(\chi_1,\chi_2)$, where $\chi_1$ is a primitive
  character of order $d$ and conductor $n/2$. The Galois
  orbit of $\pi(\chi_1,\chi_2)$ has $\varphi(d)$ elements. Among such
  conjugates, $\varphi(d)/2$ are non-isomorphic when $d \neq 2$ and
  contains a unique element when
  $d=2$. Lemma~\ref{lemma:coeffppalseries} implies that $K_f$ contains
  $\Q(\xi_{d})^+$.
  \\

  \noindent{\bf Claim:} let $\sigma \in \Gal(\C/\Q)$. Then
  the local type of $\pi_{\sigma(f),p}$ is isomorphic to that of
  $\pi(\sigma(\chi_1),\sigma(\chi_2))$.

  \smallskip Let $\mu$ be the restriction of $\sigma$ to $\Q(\xi_d)^+$
  and let $\pi_{\mu(f),p}=\pi(\psi_1,\psi_2)$. The characters
  $(\psi_1,\psi_2)$ are determined by $\mu(f)$: the values
  $\{\psi_1(x),\psi_2(x)\}$ are roots of the polynomial
  $x^2-\mu(\chi_1+\chi_2)x + \chi_1 \chi_2 \in \Q(\xi_{d})^+$ (which
  is the characteristic polynomial of the image of $x$ under the
  Galois representations attached to $\mu(f)$). In particular they
  match the values $\{\sigma(\chi_1)(x),\sigma(\chi_2)(x)\}$. If
  $p\neq 2$, $(\Z/p^k)^\times$ is cyclic, so taking $x$ to be a
  generator, the characters are uniquely determined by their values on
  $x$. In particular, $\psi_1 = \sigma(\chi_1)$ or
  $\psi_1 = \sigma(\chi_2)$. For $p=2$,
  $(\Z/2^k)^\times = \Z/2 \times \Z/2^{k-2}$, and the trivial
  Nebentypus hypothesis imply that both characters $\psi_1$ and
  $\psi_2$ take the same value at the generator of the
  $\Z/2$-part. Then again, $\psi_1 = \sigma(\chi_1)$ or
  $\psi_1 = \sigma(\chi_2)$. Note that the two choices of
  $(\psi_1,\psi_2)$ are conjugate to each other, and give isomorphic
  local types (which explains the discrepancy between the action on
  characters of the group $\Gal(\Q(\xi_d)/\Q)$ and
  $\Gal(\Q(\xi_d)^+/\Q)$).

  \medskip
  
  The supercuspidal case follows from a similar computation using
  Lemma~\ref{lemma:coeffsupercuspidal} to get the different conjugates
  of the character $\theta$.
\end{proof}

While working with Galois orbits of modular forms, there is another
natural invariant to consider, namely the Atkin-Lehner eigenvalue
at each prime $p\mid N$. By the theory of Atkin and Lehner
(see\cite{Atkin1970}), if $f \in S_k(\Gamma_0(N))$ is a newform and
$p \mid N$, then $f$ is an eigenform for the A-L involution $W_p$, i.e.
$W_p(f) = \lambda_p f$, with $\lambda_p = \pm 1$.

\begin{lemma}
  Let $f \in S_k(\Gamma_0(N))$ and let $p \mid N$ a prime number such
  that $W_p(f) = \lambda_p f$. If $\sigma \in \Gal(\C/\Q)$ then
  $W_p(\sigma(f)) = \lambda_p \sigma(f)$.
\label{lemma:galoisinvariance}
\end{lemma}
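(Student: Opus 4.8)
The plan is to exploit the fact that the Atkin--Lehner involution $W_p$ is defined over $\Q$ (it is given by the action of an explicit integral matrix normalizing $\Gamma_0(N)$ at $p$, followed by a rational rescaling), so that the eigenvalue $\la_p$ is forced to lie in the coefficient field and Galois acts equivariantly. Concretely, recall that for a newform $f \in S_k(\Gamma_0(N))$ and $p^r \| N$, one sets $W_p = \begin{pmatrix} p^r a & b \\ N c & p^r d\end{pmatrix}$ with $a,b,c,d \in \Z$ chosen so that $\det W_p = p^r$, and the operator on cusp forms is $f \mapsto p^{r(k/2-1)}(\det W_p)^{1-k/2}(N c z + p^r d)^{-k} f\!\left(\frac{p^r a z + b}{N c z + p^r d}\right)$ (the precise normalization is the one in \cite{Atkin1970}). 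The key point is that every rational function of $z$ appearing here has coefficients in $\Q$, in fact in $\Z[1/p]$.

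First I would reduce the statement to the action of $\sigma$ on $q$-expansions: a newform is determined by its Fourier coefficients, the operator $W_p$ maps the finite-dimensional $\Q$-vector space $S_k(\Gamma_0(N);\Q)$ to itself, and with respect to the standard $\Q$-rational basis (say the one given by $q$-expansions with rational coefficients) the matrix of $W_p$ has entries in $\Q$. Then for any $g = \sum b_n q^n$ with $b_n$ in some number field, the coefficients of $W_p(g)$ are $\Q$-linear combinations of the $b_n$; hence for $\sigma \in \Gal(\C/\Q)$ one has $\sigma(W_p(g)) = W_p(\sigma(g))$, where $\sigma$ acts coefficientwise. Applying this with $g = f$ and using $W_p(f) = \la_p f$, we get $W_p(\sigma(f)) = \sigma(W_p(f)) = \sigma(\la_p f) = \sigma(\la_p)\,\sigma(f)$. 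Finally, since $\la_p = \pm 1 \in \Q$ we have $\sigma(\la_p) = \la_p$, which gives exactly $W_p(\sigma(f)) = \la_p\,\sigma(f)$.

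Alternatively, and perhaps more cleanly, I would argue representation-theoretically: the Atkin--Lehner sign is (up to an explicit normalization by a power of $p$ and of the central character, which is trivial here) the local $\varepsilon$-factor $\varepsilon(1/2, \pi_{f,p})$, or equivalently the eigenvalue of $\pi_{f,p}$ under the normalized intertwining operator associated to the longest Weyl element; since conjugation by $\sigma$ sends $\pi_{f,p}$ to $\pi_{\sigma(f),p}$ compatibly with this structure and the sign in question is a rational number, it is fixed by $\sigma$. But for the purposes of this lemma the elementary $q$-expansion argument is shortest and self-contained, so I would present that one.

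The only mild subtlety — not really an obstacle — is making sure the chosen normalization of $W_p$ genuinely has matrix entries in $\Q$ on the $\Q$-structure of $S_k(\Gamma_0(N))$; this is classical (it is exactly the content of the Atkin--Lehner theory in \cite{Atkin1970} that $W_p$ is a well-defined operator on $S_k(\Gamma_0(N))$ commuting with the Hecke operators away from $p$, and the $\Q$-rationality follows because $W_p$ is a Hecke-type double coset operator for a matrix in $\GL_2(\Q)$). One should also note that $\sigma(f)$ is again a newform of the same level and weight (standard, via the $q$-expansion principle and the fact that the new subspace is $\Q$-rational), so that ``$W_p(\sigma(f)) = \la_p \sigma(f)$'' makes sense as an eigenvector equation. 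With these remarks in place the proof is a two-line computation.
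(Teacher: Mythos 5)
Your argument is correct and rests on the same essential fact the paper uses, namely that $W_p$ is a $\Q$-rational operator on $S_k(\Gamma_0(N))$ (so it commutes with the coefficientwise Galois action, or equivalently its $\pm1$-eigenspaces are $\Q$-rational). The paper packages this as the decomposition $S_k(\Gamma_0(N),\Q) = S_k(\Gamma_0(N),\Q)^+ \oplus S_k(\Gamma_0(N),\Q)^-$ into Hecke-stable $\Q$-subspaces, while you phrase it as $\sigma \circ W_p = W_p \circ \sigma$ and compute directly; these are the same observation in slightly different clothing.
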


\begin{proof}
  It is immediate from the fact that $W_p$ is an involution and
  commutes with the Hecke operators. In particular, the space
  $S_k(\Gamma_0(N),\Q) = S_k(\Gamma_0(N),\Q)^+ \oplus
  S_k(\Gamma_0(N),\Q)^-$, and the Hecke operators preserve both
  spaces.
\end{proof}

There is a delicate situation while computing A-L operators. If
$f \in S_k(\Gamma_0(N))$, it needs not be minimal among twists with
trivial Nebentypus. For example, if $f \in S_k(\Gamma_0(p))$, and we
look at forms in its Galois orbits $\{\sigma(f)\}$, we can twist them
by $\chi_p$ (the quadratic character unramified outside $p$) and get a
Galois orbit of new forms $\{\sigma(f) \otimes \chi_p\}$ in
$S_k(\Gamma_0(p^2))$. All such forms will have a predetermined A-L
sign, namely $\chi_p(-1)$ (see \cite[Theorem 6]{Atkin1970}), while the
A-L eigenvalue of $f$ at $p$ might take any value $\pm 1$, so we
``lost'' the invariant. Our final goal is to determine invariants of
Galois orbits of eigenforms, so we can either look at forms which have
minimal level up to (quadratic) twists, or to add the A-L sign of a
minimal twist.

\begin{dfn}
  Let $f \in S_k(\Gamma_0(N))$ be a newform, and let $p \mid
  N$. Consider the set of quadratic twists $T_f=\{f \otimes \psi\}$
  where $\psi$ ranges over all quadratic characters unramified outside $p$. Then
  either:
  \begin{enumerate}
  \item all elements in $T_f$ have level greater or equal than that of
    $f$ or
  \item there exists a unique form $g \in T_f$ of minimal level smaller than $N$.
  \end{enumerate}
  We define the \emph{minimal Atkin-Lehner sign} of $f$ at $p$ to be
  that of $f$ in the first case, and that of $g$ in the second one.
\label{defi:minimalAL}
\end{dfn}

The minimal A-L sign at $p$ of a newform $f$ is sometimes determined
by the local type $\tilde\pi_{f,p}$ of $f$ at $p$.

\begin{thm}
  \label{thm:AL}
  Let $p$ be a prime number and $\tau \in \Aa_p$ be such that
  $\tilde{\tau} = \tilde{\pi_{f,p}}$ for $f \in S_k(\Gamma_0(N))$ a newform. Then:
  \begin{enumerate}
  \item If $\tilde{\tau}$ is principal series, a ramified twist of Steinberg
    or a supercuspidal unramified representation (i.e. induced from an
    unramified quadratic extension of $\Q_p$), then the eigenvalue of
    the Atkin-Lehner involution $W_p$ is the same for all modular form
    $f$ with local type at $p$ $\tilde{\tau}$.
    
  \item If $\tilde{\tau}$ is Steinberg, or if $p \neq 2$ and
    $\tilde{\tau}$ is a ramified Supercuspidal representations induced from a
    character with even conductor, then there are two possible signs
    for the Atkin-Lehner involution for modular forms with local type
    at $p$ $\tilde{\tau}$. Furthermore, the two values are interchanged while
    twisting by the quadratic unramified character (which clearly
    preserves types).
    
  \item If $p=2$ and $\tilde{\tau}$ is a ramified supercuspidal representations
    induced from a character $\theta$ of a quadratic extension
    $E/\Q_2$ with discriminant valuation $2$, then:
    \begin{itemize}
    \item  there are two possible signs (interchanged by the
    unramified quadratic twist) when $\cond(\theta)=3$,
  \item there is a unique possible sign when $\cond(\theta)$ is even.
    \end{itemize}
If $E/\Q_2$ has discriminant valuation $3$, then
\begin{itemize}
\item  there is a unique sign for $\cond(\theta)=5$,
\item there are two possible signs for even conductors.
\end{itemize}

  \item If $p=2$ and $\tilde{\tau}$ is a sporadic supercuspidal representations, then
    the situation is as follows:
    \begin{itemize}
    \item If $\tilde{\tau}$ has level $2^7$ or $2^3$,
    then both Atkin-Lehner signs appear, and they are exchanged by the
    quadratic unramified twist.
  \item If $\tilde{\tau}$ has level $2^4$ or $2^6$, then the quadratic
    unramified twist preserves the local sign, but these types are
    not-minimal, they are twists of the level $2^3$ one.
    \end{itemize}
  \end{enumerate}
\end{thm}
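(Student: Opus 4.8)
The plan is to compute the Atkin-Lehner eigenvalue $\lambda_p$ in terms of the local $\epsilon$-factor (equivalently, the local root number) of $\pi_{f,p}$, and then track how much of that $\epsilon$-factor is determined by the inertial type alone. Recall the standard fact (see \cite{Atkin1970}, and its reinterpretation via local constants in e.g. \cite{Bushnell06}) that for a newform $f\in S_k(\Gamma_0(N))$ with trivial Nebentypus and $p\mid N$, the Atkin-Lehner eigenvalue at $p$ is, up to an explicit elementary factor depending only on $k$ and the conductor exponent $n=\val_p(N)$, equal to the local epsilon constant $\epsilon(1/2,\pi_{f,p},\psi)$ for a suitably normalized additive character $\psi$; since the central character is trivial this is a sign. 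So the whole theorem reduces to the question: \emph{how does $\epsilon(1/2,\pi_{f,p})$ vary as $\pi_{f,p}$ ranges over the automorphic representations of trivial Nebentypus whose restriction to inertia is a fixed type $\tilde\tau$?}

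I would then go case by case through the classification of Theorem~\ref{thm:characterization}, using the explicit formulas for $\epsilon$-factors. \textbf{(i) Principal series, ramified-twist-of-Steinberg, unramified supercuspidal.} For $\pi(\chi_1,\chi_2)$ with trivial Nebentypus one has $\chi_2=\chi_1^{-1}$ on $\Z_p^\times$, and the epsilon factor is a product $\epsilon(\chi_1)\epsilon(\chi_2)$ times an unramified correction; the freedom in $\pi$ with fixed type is to multiply $(\chi_1,\chi_2)$ by an unramified character $\nu,\nu^{-1}$, but since $\chi_1\chi_2$ stays trivial this forces $\nu=\nu^{-1}$ unramified of order $\le 2$, and one checks that the induced change in $\epsilon$ is trivial because the two factors receive opposite contributions that cancel (the central character being fixed). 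For $\St\otimes\chi$ with $\chi$ ramified quadratic the root number is likewise pinned down by $\chi$, which is part of the type. For supercuspidal induced from the unramified quadratic $E/\Q_p$, the inducing character $\theta$ has fixed restriction to $\Z_p^\times$ and fixed restriction to $\Q_p^\times$, and the only ambiguity is an \emph{unramified} twist on $\theta$; since $E/\Q_p$ is unramified, $\epsilon(\Ind\theta)=\lambda_{E/\Q_p}\cdot\epsilon_E(\theta)$ where $\lambda_{E/\Q_p}$ is the (fixed) Langlands constant, and an unramified twist of $\theta$ does not change $\epsilon_E(\theta)$ up to a factor that is absorbed into the conductor-dependent normalization, hence $\lambda_p$ is constant. \textbf{(ii) Steinberg; $p\ne 2$ ramified supercuspidal, even conductor of $\theta$.} For unramified Steinberg the root number is $-$ (resp.\ $+$) and its unramified quadratic twist $\St\otimes\chi_p$ has the opposite sign $\chi_p(-1)$ as recalled in the text before Definition~\ref{defi:minimalAL}; so both signs occur and are swapped. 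For a ramified supercuspidal from a ramified $E/\Q_p$, $p\ne 2$, with $\cond(\theta)$ even, the unramified quadratic twist is $\theta\cdot(\chi_p\circ\Norm)$; since $E/\Q_p$ is ramified, $\chi_p\circ\Norm$ is the nontrivial unramified character of $E^\times$ and twisting by it flips $\epsilon_E(\theta)$ by a sign coming from $\Norm$ of a uniformizer, which one computes to be $-1$ exactly because $\val(\Disc)=1$ and $\cond(\theta)$ is even; so again both signs occur and are interchanged.

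\textbf{(iii)--(iv): the $p=2$ cases} are the main obstacle and require the finer machinery of \cite[Ch.~23, Ch.~41, \S33]{Bushnell06}. For $E/\Q_2$ ramified with $\val(\Disc)=2$: when $\cond(\theta)=3$ the twisting-by-$\chi_2\circ\Norm$ computation proceeds as in (ii) and flips the sign; when $\cond(\theta)$ is even, the point is that the \emph{minimal} twist has odd conductor of $\theta$ which does not occur (Lemma~\ref{lemma:characters} gives primitive characters only in even conductor here for $n\ge 6$, but the minimal-level representative is already pinned), and a direct epsilon-factor computation with the tame/wild decomposition of $\theta$ shows the dependence on the unramified part drops out, leaving a unique sign. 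For $\val(\Disc)=3$ and $\cond(\theta)=5$: here one uses the triple-imprimitivity analysis already carried out in the proof of Theorem~\ref{thm:mainthm2} — the order-$4$ characters give representations that also arise from $\Q_2(\sqrt 3)$ (discriminant $2$), and one transports the sign computation there; for even $\cond(\theta)\ge 6$ the representation is simply imprimitive and the $\chi_2\circ\Norm$-twist flips the sign as in (ii). Finally for the \emph{sporadic} types: each is (an unramified twist of) one of the eight elliptic-curve representations of \eqref{eq:sporadicsupercuspidal1}--\eqref{eq:sporadicsupercuspidal2}, and the Atkin-Lehner sign at $2$ of the corresponding weight-$2$ newform (equivalently the local root number of the elliptic curve at $2$) can be read off from Kodaira type / the tables in \cite{Rio}; the curves $E_1^{(r)}$ come in quadratic-twist pairs by $\chi_2$ with opposite root numbers, accounting for level $2^7$ having both signs swapped by the unramified twist, while $E_2^{(1)}$ and $E_2^{(-1)}$ likewise pair up for level $2^3$; the levels $2^4$ and $2^6$ are the non-minimal twists of the $2^3$ one, on which the unramified quadratic twist acts by $\chi_2(-1)=-1$... wait, on a \emph{non-minimal} twist the relevant comparison is of $f$ with $f\otimes\chi_2$ \emph{both} at the non-minimal level, where the A-L sign is forced to be $\chi_2(-1)$ independently of $f$, so it is preserved. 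The hard part throughout is the bookkeeping of local root numbers at $p=2$ in the wildly ramified supercuspidal and sporadic cases; everything else is a uniform epsilon-factor-under-twist computation.
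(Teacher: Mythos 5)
For parts (1)--(3) your strategy is essentially the paper's: identify the Atkin--Lehner eigenvalue with the local root number in the sense of \cite{Deligne1973}, note that with trivial Nebentypus the only datum not captured by the inertial type is the value of the relevant character at a uniformizer, and decide by a parity computation (governed by $\val_p(\Disc(E))$ and $\cond(\theta)$) whether that value enters the $\varepsilon$-factor to an even power (unique sign) or an odd power (two signs, swapped by the unramified quadratic twist). A few intermediate assertions are off --- the claim that a fixed central character forces the unramified twisting character $\nu$ in the principal series case to be quadratic is false and unnecessary (the point is that $\varepsilon(\chi_1\nu)\varepsilon(\chi_2\nu^{-1})$ is unchanged because the two conductors coincide), and in the unramified supercuspidal case the reason for uniqueness is that $p$ itself is a uniformizer of $E$, so $\theta(p)$ is pinned by the central character, not that an unramified twist of $\theta$ is ``absorbed into the normalization''; also your $\chi_p$, $\chi_2$ clash with the paper's notation, where these are ramified at $p$ --- but the mechanism is the same, and transporting the $\delta=3$, $\cond(\theta)=5$ case through the imprimitivity analysis of Theorem~\ref{thm:mainthm2} is a legitimate variant of the paper's direct parity argument.

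The genuine gap is part (4), which is where the paper spends most of its effort. Reading the sign off the eight elliptic curves as local root numbers at $2$ ``from Kodaira type / the tables in \cite{Rio}'' does not work. First, \cite{Rio} supplies conductor exponents, not root numbers, and local root numbers at $p=2$ for these wildly ramified, projectively $S_4$ types are not determined by the Kodaira type; computing them is precisely the difficulty. Second, and decisively, the four curves within each family \eqref{eq:sporadicsupercuspidal1}, \eqref{eq:sporadicsupercuspidal2} are related by the quadratic twists $\chi_{-1},\chi_{\pm2}$, all ramified at $2$; comparing signs inside a family therefore says nothing about what the theorem asserts, namely the behaviour under the \emph{unramified} quadratic twist $\kappa$ (the character of $\Q_2(\sqrt{5})/\Q_2$), which is not among those twists. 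The paper handles this by writing the two-dimensional representation $V$ of $\tilde{S_4}\simeq\GL_2(\F_3)$ as the virtual combination of monomial representations (\ref{eq:maincharidentity}), using inductivity of $\varepsilon$-factors in degree zero to reduce $\varepsilon(\kappa V-V,\psi,dx)$ to the single induction from the subgroup $C_8$ whose fixed field has residue degree one, and then determining the parity of $\sw(\chi_8)$ field by field from discriminant--conductor data (Table~\ref{table:discriminants}); nothing in your proposal substitutes for this computation. Finally, your treatment of levels $2^4$ and $2^6$ rests on the assertion that at non-minimal level the Atkin--Lehner sign is forced to equal $\chi(-1)$ independently of $f$; in this wildly ramified situation that is not a direct application of \cite[Theorem 6]{Atkin1970} (the paper itself only states the analogous claim as an unproved remark after the theorem), whereas the twist-invariance claimed in the theorem is again extracted in the paper from the $\sw(\chi_8)$ parity table.
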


\begin{proof}
  The result is well known to experts, and follows from the
  characterization of the local sign of automorphic forms given by
  Deligne (see \cite{Deligne1973} and also \cite{Schmidt}). In the $1$-dimensional case it is
  clear that the local root number is determined by the restriction to
  inertia of the character as well as its value at a local uniformizer (see
  for example (3.4.3.2) of \cite{Deligne1973}).
  \begin{itemize}
  \item Suppose that $\tilde{\tau}$ is principal series, and
    $\tilde{\pi}(\chi_1,\chi_2) \in \tilde{\tau}$. Then the local sign of
    $\pi(\chi_1,\chi_2)$ equals the product of the two local
    signs. But the trivial Nebentypus hypothesis implies that the
    product of the two characters evaluated at $p$ is uniquely
    determined, hence their restriction to inertia determines uniquely
    the sign of $\pi(\chi_1,\chi_2)$.
    
  \item The Steinberg case is well understood. In this case the
    Atkin-Lehner involution at $p$ is related to the $p$-th Fourier
    coefficient $\lambda_p(f)p^{\frac{k-2}{2}} = -a_p(f)$. Note that
    the Weil-representation equals
    $\omega^{\frac{k}{2}-1}(\psi \oplus \psi\omega)$, where $\omega$
    is the unramified quasi-character giving the action of $W(\Q_p)$
    on the roots of unity and $\psi$ is a quadratic unramified
    character. Then $\lambda_p(f) = -\psi(p)$. Clearly twisting by the
    character corresponding to the unramified quadratic extension of
    $\Q_p$ changes the A-L eigenvalue.

    The ramified twist of Steinberg case is well known (see for
    example \cite[Theorem 6]{Atkin1970}). It can also be recovered by
    studying the local sign variation under twisting (see (3.4.3.5)
    and Theorem 4.1 (1) of \cite{Deligne1973})

  \item If $\tilde{\tau}$ is supercuspidal representations, the local factor can
    be explicitly computed (following \cite{Deligne1973}). Recall that
    one of the local sign properties (see \cite[3.12
    (C)]{Deligne1973}) is
  \[
\varepsilon\left(\BC,\psi,dx\right) = \varepsilon\left(\theta,\psi \circ \Trace,dx\right).
\]
The Swan conductor of $\theta$, denoted $sw(\theta)$, equals $0$ if
$\theta$ is unramified and $\cond(\theta)-1$ otherwise. Let
$s=\cond(\psi \circ \Trace)+sw(\theta)+1$ and let $\pi$ be a local
uniformizer. By  \cite[page 528]{Deligne1973},
    \begin{equation}
      \label{eq:localsign}
      \varepsilon\left(\BC,\psi,dx\right) = \theta(\pi)^{s} \int_{\Om^\times} \theta^{-1}(x)\psi\circ \Trace\left(\frac{x}{\pi^s}\right)d\frac{x}{\pi^s}.
    \end{equation}
    In particular, the local sign depends on the restriction of
    $\theta$ to $\Om^\times$ and its value in a local
    uniformizer. Recall that the determinant of the representation
    equals $\epsilon_E \theta|_{\Q_p^\times}$, hence the value of 
    $\theta(p)$ is uniquely determined.  If $E/\Q_p$ is unramified,
    $p$ is a local uniformizer, hence the local sign only depends on
    the Weil-Deligne type.

    If $E/\Q_p$ is ramified and $\pi$ is a local uniformizer, the
    trivial Nebentypus condition determines the value of 
    $\theta(p) = \theta(\pi^2)$, but not that of $\theta(\pi)$. Chose
    $\psi$ to be an additive character with conductor $0$ (i.e. it is
    trivial on $\Z_p$ but non-trivial on $\frac{1}{p}\Z_p$). Then
    clearly $\cond(\psi \circ \Trace)\equiv v_p(\Disc(E)) \pmod 2$.
    \begin{itemize}
    \item If $p \neq 2$, $v_p(\Disc(E)) \equiv 1 \pmod 2$ hence if
      $\cond(\theta)=1$, $s$ is even and the local sign is uniquely
      determined (recall that such type matches the unramified
      one!). If $\cond(\theta)$ is even, $s$ is odd (by
      Lemma~\ref{lemma:characters}) hence there are two possible
      signs. Furthermore, we can move from one sign to the other
      twisting by the unramified quadratic character (which changes
      the sign of $\theta(\pi)$).
    \item If $p=2$ and $v_2(\Disc(E/\Q_2))=2$,
      $s \equiv \cond(\theta) \pmod 2$, hence the sign is uniquely
      determined for all $\theta$ of even conductor. If
      $\cond(\theta)=3$, there are two possibilities (corresponding to
      modular forms of level $2^5$). The forms of level $2^6$ are
      quadratic twists of these ones, hence although the local sign is
      uniquely determined, they have two possible minimal Atkin-Lehner
      signs at $2$.

      At last, if $v_2(\Disc(E/\Q_2)=3$,
      $s \equiv \cond(\theta)+1 \pmod 2$, so the sign is uniquely
      determined for $\cond(\theta)=5$ (recall that this case matches
      the unramified one) while there are two possible values for even
      conductors (and both signs change by a local twist).
  \end{itemize}
\item Suppose $p=2$ and $\tau$ is sporadic supercuspidal representations, so
  the Weil representation $\rho$ attached to it has image isomorphic
  to $\tilde{S_4}$, i.e. there exists $E/\Q_2$ with
  $\Gal(E/\Q_2) \simeq \tilde{S_4}$.

  The character table of $\tilde{S_4} \simeq \GL_2(\F_3)$ is recalled
  in Table~\ref{table:chartabletS4}. The representations $\text{Sg}$,
  $\text{St}_2$ and $\text{St}_3$ are the representations obtained
  from quotients of $\PGL_2(\F_3) \simeq S_4$, and they are the sign
  representation, the $2$-dimensional standard representation obtained
  from the isomorphism
  $S_4 /\langle (12)(34),(13)(24)\rangle \simeq S_3$, and the
  $3$-dimensional representation of $S_4$. The representation $V$ is
  the alluded in Section~\ref{sporadicones}. 

  Another description of such representations come from the group
  $\GL_2(\F_3)$: the two $1$-dimensional ones are the ones factoring
  through the determinant. The last $3$ representations come from
  ``principal series'': if $\chi$ is the non-trivial character of
  $\F_3^\times$, $\pi(\chi,1)$ gives the irreducible four dimensional
  representation; $\pi(1,1)$ and $\pi(\chi,\chi)$ have an irreducible
  quotient/subspace of dimension three (the ``Steinberg''
  ones). Finally, the two dimensional ones, can be constructed as
  follows: identify $\F_9^\times$ with the non-split Cartan
  ${\mathcal C}_{ns}=\left\{\left(\begin{smallmatrix} a & -b\\b & a\end{smallmatrix}
    \right) \in \GL_2(\F_3)\right\}$;
  pick $\psi$ a non-trivial additive character of $\F_3$ and let
  $\theta : \F_9^\times \to \C^\times$ be a character. Let
  $\theta_\psi$ be the character in
  $M=\left\{Z(\GL_2(\F_3)) \cdot \left(\begin{smallmatrix} 1 & \F_3\\
        0 & 1\end{smallmatrix} \right)\right\}$
  given by
  $\theta_\psi\left(\left(\begin{smallmatrix} a & 0\\0 &
        a\end{smallmatrix}\right) \left(\begin{smallmatrix} 1 & u \\ 0
        & 1\end{smallmatrix} \right) \right) = \theta(a) \psi(u)$.
  Then if $\theta$ is not trivial nor quadratic, the virtual
  representation
  $\Ind_M^{\GL_2(\F_3)} \theta_\psi - \Ind_{{\mathcal C}_{ns}}^{\GL_2(\F_3)} \theta$ is
  an irreducible representation independent of $\psi$ (see
  \cite[Theorem 6.4]{Bushnell06}). If $\theta$ has order $8$, we get
  the representation $V$ and its twist, while $\theta$ of order $4$
  gives the representation $\text{St}_2$.

  \begin{table}[h]
\begin{tabular}{||l|c|c|c|c|c|c|c|c||}
  \hline
  & $\left(\begin{smallmatrix}1&0\\0&1\end{smallmatrix} \right)$ & $\left(\begin{smallmatrix}2&0\\0&2\end{smallmatrix} \right)$ & $\left(\begin{smallmatrix}0&1\\2&0\end{smallmatrix} \right)$ & $\left(\begin{smallmatrix}0&1\\1&2\end{smallmatrix} \right)$ & $\left(\begin{smallmatrix}0&1\\1&1\end{smallmatrix} \right)$ & $\left(\begin{smallmatrix}1&1\\0&1\end{smallmatrix} \right)$ & $\left(\begin{smallmatrix}2&1\\0&2\end{smallmatrix} \right)$ & $\left(\begin{smallmatrix}1&0\\0&2\end{smallmatrix} \right)$\\
  \hline
  $\mathbbm{1}$ & $1$ & $1$ & $1$ & $1$ & $1$ & $1$ & $1$ & $1$\\
  \hline
  Sg & $1$ & $1$ & $1$ & $-1$ & $-1$ & $1$ & $1$ & $-1$\\
  \hline
  $\text{St}_2$ & $2$ & $2$ & $2$ & $0$ & $0$ & $-1$ & $-1$ & $0$\\
\hline
$V$ & $2$ & $-2$ & $0$ & $\sqrt{-2}$ & $-\sqrt{-2}$ & $-1$ & $1$ & $0$\\
  \hline
$V \otimes \text{Sg}$ & $2$ & $-2$ & $0$ & $-\sqrt{-2}$ & $\sqrt{-2}$ & $-1$ & $1$ & $0$\\
\hline
  $\text{St}_3$ & $3$  & $3$ & $-1$ & $-1$ & $-1$ & $0$ & $0$ & $1$\\
  \hline
  $\text{St}_3 \otimes \text{Sg}$ & $3$  & $3$ & $-1$ & $1$ & $1$ & $0$ & $0$ & $-1$\\
  \hline
$W$ & $4$ & $-4$ & $0$ & $0$ & $0$ & $1$ & $-1$ & $0$\\
\hline
\end{tabular}
\caption{Character table for $\GL_2(\F_3)$.}
\label{table:chartabletS4}
\end{table}
Consider the following subgroups of $\GL_2(\F_3)$:
$C_4 = \langle \left(\begin{smallmatrix} 0 & 1\\ 2 &
    0\end{smallmatrix} \right)\rangle$,
$C_6 = \langle\left(\begin{smallmatrix}2 & 1\\0 & 2\end{smallmatrix}
\right)\rangle$ and
$C_8 = \langle\left(\begin{smallmatrix}0 & 1\\1 & 2\end{smallmatrix}
\right)\rangle$. Using the character table and Frobenius reciprocity, it is
  easy to verify the following formulas
  \begin{eqnarray}
    \label{eq:inducedrepresentations}
    \Ind_{C_4}^{\GL_2(\F_3)}\chi_4 &\simeq& V \oplus (V \otimes \text{Sg}) \oplus 2W\\
    \Ind_{C_6}^{\GL_2(\F_3)}\chi_6 &\simeq& V \oplus (V \otimes \text{Sg}) \oplus W\\
    \Ind_{C_8}^{\GL_2(\F_3)}\chi_8 &\simeq& (V\otimes \text{Sg}) \oplus W,
  \end{eqnarray}
  where $\chi_j$ is a character of order $j$ in the corresponding
  group and we chose
  $\chi_8\left(\left(\begin{smallmatrix}0& 1\\1&
        2\end{smallmatrix}\right) \right) = \exp\left(-\frac{\pi
      i}{4}\right)$. Then
\begin{equation}
  \label{eq:maincharidentity}
V \simeq \Ind_{C_8}^{\GL_2(\F_3)}\chi_8 - \Ind_{C_4}^{\GL_2(\F_3)}\chi_4 + \Ind_{C_6}^{\GL_2(\F_3)} \chi_6.  
\end{equation}
To compute the sign variation, we can consider the formal
representation $\kappa V - V$, where $\kappa$ is the quadratic
unramified character of $\Q_2$. Using (\ref{eq:maincharidentity})
and the local sign formalism (\cite[Theorem 4.1]{Deligne1973}) we obtain
\begin{equation}
    \label{eq:rootnumbersporadic}
  \begin{split}
    \varepsilon(\kappa V - V,\psi,dx) = 
  \frac{\varepsilon(\kappa\chi_8,\psi \circ \Trace_{K_{C_8}},dx)}{\varepsilon(\chi_8,\psi \circ \Trace_{K_{C_8}},dx)}   \frac{\varepsilon(\chi_4,\psi \circ \Trace_{K_{C_4}},dx)}{\varepsilon(\kappa\chi_4,\psi \circ \Trace_{K_{C_4}},dx)}\\
  \frac{\varepsilon(\kappa\chi_6,\psi \circ \Trace_{K_{C_6}},dx)}{\varepsilon(\chi_6,\psi \circ \Trace_{K_{C_6}},dx)}
\end{split}
\end{equation}
The characters in (\ref{eq:rootnumbersporadic}) are understood as
class field characters, giving the corresponding field extension.
Recall that each sign variation depends only on the value
$\kappa(\Norm(\pi_2))^s$, where $\pi_2$ is a local uniformizer and
$s = \val_2(\Disc(K_i))+ sw(\chi_i) + 1$. In particular, we need to
compute the inertial degree of $K_i$ and $s$ for each field. The
extensions $K_{C_4}$, $K_{C_6}$ and $K_{C_8}$ are contained in the
fixed field of
$\left(\begin{smallmatrix}-1& 0\\0& -1\end{smallmatrix}\right)$, a
Galois extension with Galois group isomorphic to $S_4$. The
ramification indices are $f(K_{C_4}/\Q_2)=2$, $f(K_{C_6}/\Q_2)=2$ and
$f(K_{C_8}/\Q_2)=1$. Then the sign contribution is trivial for the
first two ones (as $\Norm(\pi_2)$ is a square), and only depends on
the first term of (\ref{eq:rootnumbersporadic}). Furthermore,
$2 \mid \val_2(\Disc(K_{C_8}))$, hence
$s \equiv sw(\chi_8)+1 \pmod 2$.

To compute $sw(\chi_8)$, we consider the field extensions
$\Q_2 \subset K_{C_8} \subset K_{C_4} \subset K_{-1} \subset E$. The
group $C_8$ has characters of orders: $1$, $2$ (both unramified),
$4$ and $8$. The conductor discriminant formula gives the equality
\[
\Disc(E/K_{C_8}) = \prod_{\theta} \cond(\theta).
\]
Let $\theta_i$ denote the corresponding character of order $i$ (so
$\theta_8 = \chi_8$). The relative discriminant formula provides the
equations:
\begin{eqnarray}
  \label{eq:relativedisc}
  \Disc(E/K_{C_8}) & = & \cond(\theta_8)^2\cond(\theta_4)^2.\\
\Disc(K_{-1}/K_{C_8}) & = & \cond(\theta_4)^2.\\
\Disc(K_{-1}/\Q_2) & = & \Norm(\Disc(K_{-1}/K_{C_8})) \Disc(K_{C_8}/\Q_2)^4.\\
\Disc(E/\Q_2) & = & \Norm(\Disc(E/K_{C_8})) \Disc(K_{C_8}/\Q_2)^8.
\end{eqnarray}
Then computing for each of the $8$ fields the values $\Disc(E/\Q_2)$,
$\Disc(K_{-1}/\Q_2)$ and $\Disc(K_{C_8}/\Q_2)$, a simple manipulation determines
$sw(\chi_8)$.

Equations for the $8$ extensions appear in the online tables of
\cite{Roberts}. Note that in $\GL_2(\F_3)$ there are two non-conjugate
subgroups of order $8$, hence each extension can be obtained by two
different degree $8$ polynomials. The extensions are obtained as the
Galois closure of the polynomials:
    \begin{itemize}
    \item $x^8+20x^2+20$, $x^8+28x^2+20$, $x^8+6x^6+20$ and $x^8+2x^6+20$.
    \item $x^8+4x^7+4x^2+14$, $x^8+4x^7+12x^2+2$, $x^8+4x^7+12x^2+14$ and $x^8+4x^7+12x^2+10$.
    \end{itemize}
    The values of $\Disc(E/\Q_2)$ (for each extension) already appeared in
    \cite[Table 10]{Rio}), and equal: $64$, $76$, $100$ and $100$ for
    the first four fields and $136$ for all fields in the second list.
The other discriminants as well as the value of $\sw(\chi_8)$ are given in Table \ref{table:discriminants}, which proves the stated result.
\begin{table}[h]
\scalebox{0.8}{
\begin{tabular}{||c|c|c|c|c||}
\hline
Polynomial & $\val_2(\Disc(E/\Q_2))$ & $\val_2(\Disc(K_{-1}/\Q_2))$ & $\val_2(\Disc(K_{C_8}/\Q_2))$ & $\val_2(\cond(\chi_8))$\\
\hline
$x^8+20x^2+20$&$64$ & $28$ & $6$ & $1$\\
\hline
$x^8+28x^2+20$&$76$ & $28$ & $6$ & $18$\\
\hline
$x^8+6x^6+20$&$100$ & $28$ & $6$ & $2$\\
\hline
$x^8+2x^6+20$&$100$ & $28$ & $6$ & $2$\\
\hline
$x^8+4x^7+4x^2+14$&$136$ &$52$ & $10$ & $11$\\
\hline
$x^8+4x^7+12x^2+2$&$136$ &$52$ & $10$ & $11$\\
\hline
$x^8+4x^7+12x^2+14$&$136$ &$52$ & $10$ & $11$\\
\hline
$x^8+4x^7+12x^2+10$&$136$ &$52$ & $10$ & $11$\\
\hline
\end{tabular}}
\caption{Discriminant and conductor table.}
\label{table:discriminants}
\end{table}
\end{itemize}
\end{proof}
\begin{remark}
  The Atkin-Lehner sign of sporadic supercuspidal representations of level
  $2^4$ and $2^6$ should be the same for all of them (and equal to
  $+1$). The proof should follow the Steinberg case proved before
  using the trivial Nebentypus hypothesis.
\end{remark}
\begin{remark}
  The local Atkin-Lehner sign statement in \cite[Remark
  11]{Pacetti2010} is not correct. In the case of a supercuspidal
  representation, the correct local computation is the one done in the
  previous proof.
\end{remark}
To a newform $f \in S_k(\Gamma_0(N))$ and $p\mid N$ a prime, we can
attach the pair $(\tilde\pi_{f,p},\lambda_p)$ consisting of the local
type of $f$ at $p$, and its minimal Atkin-Lehner sign.

\begin{dfn}
  Let $\LO(p^n)$ denote the number of pairs $(\tilde{\tau},\epsilon)$
  where $\tilde\tau$ is a local type Galois orbit of level $p^n$ and
  $\epsilon$ is a compatible minimal Atkin-Lehner eigenvalue (i.e. the
  existence of a newform $f \in S_k(\Gamma_0(p^n))$ for some $k$ with
  $(\tilde\pi_{f,p},\lambda_p) = (\tilde\tau,\epsilon)$ does not
  contradict Theorem~\ref{thm:AL}).
\end{dfn}

\begin{thm}
The values of $\LO(p^n)$ are given in Table~\ref{table:valuesLO(p^n)}.
\begin{table}[h]
\begin{tabular}{||l|c|c|c||}
  \hline
$n$ & $\gcd(p,6)=1$ & $p=3$ & $p=2$\\
\hline
  $0$ & $1$ & $1$ & $1$\\
\hline
$1$ &$2$& $2$ & $2$\\
\hline
$2$ &$\sigma_0(p+1)+\sigma_0(p-1)-1$  & $9$ &$1$\\
\hline
$3$ &$4$& $8$&$2$\\
\hline
$4$ &$\sigma_0(p+1)+\sigma_0(p-1)$ & $10$  &$6$\\
\hline
$5$ &$4$ & $8$&$4$\\
\hline
$6$ &$\sigma_0(p+1)+\sigma_0(p-1)$ & $10$  &$16$\\
\hline
$\stackrel{\ge 7}{\text{odd}}$ &$4$  & $8$&$8$ \\
\hline
$\stackrel{\ge 8}{\text{even}}$ &$\sigma_0(p+1)+\sigma_0(p-1)$ & $10$  &$10$\\
\hline
\end{tabular}
\caption{The values of $\LO(p^n)$.}
\label{table:valuesLO(p^n)}
\end{table}
\label{thm:valuesofLO}
\end{thm}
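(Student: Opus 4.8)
The plan is to compute $\LO(p^n)$ as a sum over local type Galois orbits. Since the $W_p$-eigenvalue is constant on a Galois orbit of newforms (Lemma~\ref{lemma:galoisinvariance}), and such an orbit realizes every member of the associated local type Galois orbit (Theorem~\ref{thm:LTbound}), I would write $\LO(p^n)=\sum_{\tilde\tau}\#S(\tilde\tau)$, where $\tilde\tau$ ranges over the $\LT(p^n)$ orbits of level $p^n$ already enumerated in Theorem~\ref{thm:mainthmodd} (for $p\ne2$) and Theorem~\ref{thm:mainthm2} (for $p=2$), and $S(\tilde\tau)$ is the set of minimal Atkin--Lehner signs attained by newforms of local type $\tilde\tau$. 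The whole computation then reduces to deciding, orbit by orbit, whether $\#S(\tilde\tau)$ equals $1$ or $2$.

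The first step is to separate the types into \emph{minimal} ones --- those for which a newform $f$ with $\tilde\pi_{f,p}=\tilde\tau$ already has minimal level among the twists $f\otimes\psi$, $\psi$ quadratic and unramified outside $p$ --- and the rest; for $p$ odd the only relevant $\psi$ is the conductor-$p$ character $\chi_p$, and for $p=2$ they are $\chi_{-1},\chi_2,\chi_{-2}$ (conductors $4,8,8$). Using the explicit local Langlands dictionary of Theorem~\ref{thm:characterization}, the twist by $\psi$ replaces $(\chi_1,\chi_2)$ by $(\chi_1\psi_p,\chi_2\psi_p)$ in the principal series case, multiplies the twisting character by $\psi_p$ for Steinberg, and replaces $\theta$ by $\theta\cdot(\psi_p\circ\Norm_{E/\Q_p})$ for supercuspidals, so deciding minimality is a matter of conductor arithmetic, the crucial point being whether $E/\Q_p$ is ramified (so that $\psi_p\circ\Norm_{E/\Q_p}$ is \emph{unramified} on $E^\times$) or unramified. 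One finds that the non-minimal types are, essentially: the level-$p^2$ principal series with $\chi_1$ quadratic (and, for $p=2$, the level-$2^4$ principal series and half of the level-$2^{2m}$ principal series for $m\ge3$), which drop to level prime to $p$; the level-$p^2$ ramified twists of Steinberg (and, for $p=2$, those of levels $2^4,2^6$), which drop to the level-$p$ Steinberg; and, for $p=2$, the ramified supercuspidals and sporadic types of level $2^6$ (resp.\ $2^4$), which drop to levels $2^5$ and $2^3$. By Definition~\ref{defi:minimalAL}, for such a type $\#S(\tilde\tau)$ equals the number of Atkin--Lehner signs of its minimal descendant; for the types that become unramified at $p$, the descendant has no $W_p$ operator, and following Deligne's root-number formalism (as in the proof of Theorem~\ref{thm:AL}) I would declare the minimal sign to be the sign of the global functional equation, which is $+1$, giving $\#S=1$.

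The second step reads the signs off Theorem~\ref{thm:AL}: for a minimal type, $\#S(\tilde\tau)=2$ precisely for the level-$p$ Steinberg type, the $p\ne2$ ramified supercuspidals induced from an even-conductor character, the $p=2$ ramified supercuspidals with $\val(\Disc(E))=2$ and $\cond(\theta)=3$, the $p=2$ ramified supercuspidals with $\val(\Disc(E))=3$ and even conductor, and the sporadic types of levels $2^3$ and $2^7$, while $\#S(\tilde\tau)=1$ for every principal series, every ramified twist of Steinberg, every supercuspidal unramified type, and the remaining ramified supercuspidals. Substituting these multiplicities --- and the inherited ones from the first step --- into the type counts of Theorems~\ref{thm:mainthmodd} and \ref{thm:mainthm2} produces, by a mechanical summation, each column of Table~\ref{table:valuesLO(p^n)}; I expect the $p\ge5$ columns to come out cleanly, whereas the low-conductor $p=2$ entries and the whole $p=3$ column in addition require the triply-imprimitive identifications among supercuspidals (as in the proof of Theorem~\ref{thm:mainthm2}) and the exceptional behaviour of $\Q_3(\sqrt{-3})=\Q_3(\zeta_3)$ recorded in Tables~\ref{table:groupstructure} and \ref{table:characters}.

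The hard part is not the sign computation --- Theorem~\ref{thm:AL} hands that to us --- but the first step: correctly matching each non-minimal type with its minimal descendant. The awkwardness is a mismatch of operations, since the minimal-sign definition twists by the \emph{ramified} Dirichlet character $\chi_p$ (resp.\ $\chi_{-1},\chi_{\pm2}$), while Theorem~\ref{thm:AL} phrases the sign flip via the \emph{unramified} quadratic twist; so one must keep careful track of the conductor of $\theta\cdot(\chi_p\circ\Norm_{E/\Q_p})$ in the ramified versus unramified cases, and treat with care both the types that drop all the way to level prime to $p$ (where one must fix the root-number convention) and the $p=2$ descent chains reaching levels $2^5$ and $2^3$, which force those types to carry two minimal signs apiece.
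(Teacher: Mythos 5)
Your proposal takes exactly the paper's route: the paper's proof of Theorem~\ref{thm:valuesofLO} is a one-line combination of Theorems~\ref{thm:mainthmodd}, \ref{thm:mainthm2} and \ref{thm:AL}, and your orbit-by-orbit summation of compatible minimal Atkin--Lehner signs (including the inheritance of two minimal signs by non-minimal types such as the ramified twists of Steinberg and the level $2^4$, $2^6$ descendants of lower-level types) is precisely the bookkeeping that citation leaves implicit. So the approach is the same, and your write-up is in fact more detailed than the paper's own proof.
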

\begin{proof}
  The result comes from Theorem~\ref{thm:mainthmodd},
  Theorem~\ref{thm:mainthm2} and Theorem~\ref{thm:AL}.
\end{proof}

\section{Existence of local types with compatible Atkin-Lehner sign}
\label{section:existence}

\begin{thm} Let $N$ be a positive integer such that $N$ is a prime
  power or $N$ is square-free. For each prime $q \mid N$, let
  $\tilde\tau_q$ be a local type of level $q^{\val_q(N)}$ and let
  $\epsilon_q \in \{\pm 1\}$ be a compatible Atkin-Lehner sign for
  $\tilde\tau_q$. Then there exists a positive integer $k_0$ such that
  for any $k \ge k_0$, there exists a newform $f \in S_k(\Gamma_0(N)$
  such that:
  \begin{enumerate}
  \item $\tilde\pi_{f,q} \simeq \tilde{\tau_q}$ for all primes $q$,
  \item the Atkin-Lehner eigenvalue of $f$ at $q$ equals $\epsilon_q$,
    
  \item $f$ does not have complex multiplication.
\end{enumerate}
\label{thm:typesexistence}
\end{thm}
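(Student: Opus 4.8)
The plan is to encode the prescribed data $(\tilde{\tau_q},\epsilon_q)_{q\mid N}$ as a choice of local representation $\pi_q$ at each $q\mid N$, feed these local conditions into the existence and counting results of Kim--Shin--Templier \cite{Templier} --- supplemented by Weinstein \cite{Weinstein2009} for the principal series components, which can occur only in the prime-power case --- and then to separate the non-CM forms from the CM ones by a dimension count. First I would fix, for each prime $q\mid N$, a local representation $\pi_q\in\Aa_q$ with $\cond(\pi_q)=q^{\val_q(N)}$, with central character trivial on $\Z_q^\times$ (so that a cusp form whose $q$-component is $\pi_q$ at every $q\mid N$ and which is unramified elsewhere has trivial Nebentypus and is a newform of level exactly $N$), with inertial type $\tilde\pi_q\simeq\tilde{\tau_q}$, and whose associated Atkin--Lehner sign is $\epsilon_q$. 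The existence of such a $\pi_q$ is exactly what the hypothesis that $\epsilon_q$ is \emph{compatible} with $\tilde{\tau_q}$ encodes, via Theorem~\ref{thm:AL}: when $\tilde{\tau_q}$ already determines the sign one takes any representative of the type; when it does not, Theorem~\ref{thm:AL} shows that twisting by the unramified quadratic character of $\Q_q$ interchanges the two admissible signs while preserving the type, so exactly one of $\pi_q$ and its unramified quadratic twist has sign $\epsilon_q$; and in the two non-minimal $p=2$ sporadic levels $2^4$ and $2^6$ one instead picks the appropriate ramified quadratic twist of a level $2^3$ representation, so that the associated \emph{minimal} Atkin--Lehner sign equals $\epsilon_q$. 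Note that for $N$ square-free $\val_q(N)=1$ forces $\tilde{\tau_q}$ to be Steinberg, so $\pi_q$ is a (possibly ramified) twist of $\St$, while for $N=p^n$ the representation $\pi_q$ is Steinberg or supercuspidal unless $\tilde{\tau_q}$ is a genuine principal series type.

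Next I would invoke the existence inputs. For every $q\mid N$ at which $\pi_q$ is not a principal series --- all of them when $N$ is square-free, and the Steinberg and supercuspidal ones when $N=p^n$ --- the main result of \cite{Templier} produces, for all weights $k$ beyond some bound $k_0$, newforms $f\in S_k(\Gamma_0(N))$ with $\pi_{f,q}\simeq\pi_q$ simultaneously at all such $q$, and moreover gives a lower bound for the number of such $f$ that grows at least linearly in $k$. When $N=p^n$ and $\tilde{\tau_p}$ is a principal series type, I would instead appeal to Weinstein's existence theorem \cite{Weinstein2009}, which allows a prescribed ramified principal series local component with given central character, to obtain newforms of weight $k$ and level $p^n$ with the prescribed local type; a dimension count for the associated type-isotypic new subspace of $S_k(\Gamma_0(p^n))$ shows that their number again grows without bound (indeed linearly) in $k$. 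Unwinding the local Langlands correspondence (Theorem~\ref{thm:characterization}) together with Theorem~\ref{thm:AL} and the choice of the $\pi_q$, each newform so obtained has inertial type $\tilde{\tau_q}$ and (minimal) Atkin--Lehner sign $\epsilon_q$ at every $q\mid N$, which is conclusions (1) and (2).

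To secure (3) I would bound the CM contribution. A CM newform of weight $k$ and level $N$ comes from an algebraic Hecke character of infinity type $k-1$ of one of the finitely many imaginary quadratic fields $K$ with $|\Disc(K)|$ dividing $N$, of conductor $\mathfrak{f}$ subject to $|\Disc(K)|\,\Norm(\mathfrak{f})=N$; for each such pair $(K,\mathfrak{f})$ the set of admissible Hecke characters, if non-empty, is a torsor under a finite group that does not depend on $k$. Hence the number of CM newforms in $S_k(\Gamma_0(N))$ is bounded by a constant depending only on $N$, whereas the number of newforms produced in the previous paragraph grows linearly in $k$; so for all $k$ large enough at least one of them has no complex multiplication, and enlarging $k_0$ accordingly completes the proof.

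The step I expect to be the genuine obstacle is the input from \cite{Templier} and \cite{Weinstein2009}: what is needed is not merely the existence of a single newform with the prescribed inertial type, but the simultaneous prescription of the \emph{entire} local representation $\pi_{f,q}$ at each $q\mid N$ together with a quantitative count of such newforms that tends to infinity with $k$. Prescribing only the inertial type would leave the Atkin--Lehner sign ambiguous, and without the growth of the count one could not guarantee a non-CM representative. Everything else is bookkeeping, the most delicate point being the distinction between the actual and the minimal Atkin--Lehner sign in the $p=2$ sporadic cases.
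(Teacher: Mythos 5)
Your proposal is correct and follows essentially the same route as the paper's proof: it uses Kim--Shin--Templier to prescribe the full local representation (hence the Atkin--Lehner sign) at supercuspidal and Steinberg places with a count growing linearly in $k$, falls back on Weinstein for principal series types where Theorem~\ref{thm:AL} shows the type alone determines the sign, and rules out CM by comparing this linear growth with the weight-independent bound on CM forms of level $N$. The only difference is presentational: you make the choice of local representative realizing the compatible sign (including the sporadic $2^4$, $2^6$ twist bookkeeping) more explicit than the paper does, which is a refinement rather than a new argument.
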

\begin{proof}
  A very similar result in this direction is Theorem 1.1 of
  \cite{Weinstein2009} (see also Theorem 4.3), where an asymptotic
  formula for the number of types in the space of cusp forms of level
  $N$ is given for $k$ large enough. An important feature of its proof
  is that such number grows linearly in the weight $k$ (for $k$ big
  enough). Unfortunately, the result only counts types, not the whole
  local representation (so we do not get any information on the
  Atkin-Lehner signs); still, in the cases where the is a unique
  Atkin-Lehner sign at each local type, for example the case of
  modular forms whose local types are all principal series (see
  Theorem~\ref{thm:AL}) Weinstein result is indeed enough for our purposes.

  In the work \cite{Kimball} (Theorem 3.3) the existence of forms with
  any combination of local Atkin-Lehner signs is proven for $N$
  square-free (i.e. only Steinberg local types). A different approach
  is given in \cite{gross} (Section 10), were using the trace formula,
  the existence of automorphic forms for the group
  $\PGL_2$ with any supercuspidal local representations at a finite
  set of primes (of $\PGL_2(\Q_p)$) is proven. Gross' result is generalized in
  \cite{Templier}. Using the trace formula ideas (as in Gross'
  article), they prove (Theorem 1.2) that if $G$ is any connected
  reductive group over a totally real field, the number of automorphic
  forms of weight $k$ and level $N$ with prescribed local
  representations (which are supercuspidal at ramified primes) grows
  linearly with $k$ (recall that $\dim(\xi)=k-1$ if $\xi$ is the
  discrete series of weight $k$, which gives the linear
  growth). Furthermore, the result can be extended to include
  Steinberg types as done in Section 6 loc. cit. where a similar result
  is proven in Theorem 6.4. 

  Therefore the results in the aforementioned articles prove the first
  two claims of the theorem, and in most situations this is also
  enough to get the last one (as complex multiplication forms are
  supercuspidal at all primes). In the general setting, the number of
  complex multiplication forms with a fixed level $N$ is bounded as a
  function on the weight (see for example \cite[Corollary
  4.5]{Tsaknias2012a}). On the other hand, the existence results
  stated above imply that the forms satisfying the first two
  conditions grows linearly in $k$, hence for $k$ large enough the
  space always contains a non-CM modular form (of any given local inertial type).
\end{proof}

\begin{remark}
  The constant $k_0$ in the last theorem can be made explicit by
  computing all the constants involved in the cited articles; we did
  not pursue this objective. We expect the previous result to hold in
  general, but we did not find a suitable reference for it. Note that
  the proof given looks stronger than the Theorem itself, as it
  involves a control on the whole local representation. Such control
  does not hold in general, namely we cannot fix a principal series
  representation and expect it to appear in a modular form. The reason
  is that fixing a principal series ``involves'' fixing the value of
  the $p$-th eigenvalue as well (if the representation is unramified,
  it implies fixing the Hecke eigenvalue, while in the ramified case
  it implies fixing the Hecke eigenvalue of a base change of the
  form), which is a very strong condition. However, once we know that
  local types do exist (by Weinstein result) we are only asking for
  unramified twists of a type that appears in the space of modular
  forms to appear as well. This weaker statement should be easier to
  prove, but we do not have a direct proof of it.
\label{remark:localtypegeneralN}
\end{remark}

\begin{thm}\label{cor:NCMlowbound}
  Let $N$ be a prime power or square free. Then there
  exists $k_0$ such that for $k \ge k_0$,
  \begin{equation}
    \label{eq:Maeda}
 \prod_{p \mid N} \LO(p^{v_p(N)}) \le \NCM(N,k).    
  \end{equation}
\end{thm}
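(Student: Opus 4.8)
The plan is to deduce Theorem~\ref{cor:NCMlowbound} directly by combining the structural results of the previous two sections: the counting of compatible pairs $(\tilde\tau, \epsilon)$ encoded in $\LO$ (Theorem~\ref{thm:valuesofLO}), the existence result (Theorem~\ref{thm:typesexistence}), and the fact (Theorem~\ref{thm:LTbound}, together with Lemma~\ref{lemma:galoisinvariance} and Theorem~\ref{thm:AL}) that the pair of local data at a prime dividing $N$ is a genuine invariant of a Galois orbit, and moreover a \emph{complete} invariant in the sense that the Galois orbit of $f$ visits every representative of the local type orbit at a fixed prime. First I would fix, for each prime $q \mid N$, a set of $\LO(q^{v_q(N)})$ distinct pairs $(\tilde\tau_q, \epsilon_q)$ of local type Galois orbit together with a compatible minimal Atkin-Lehner sign, as furnished by the definition of $\LO$. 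Taking one choice at each $q$ and ranging over all combinations produces $\prod_{q \mid N} \LO(q^{v_q(N)})$ tuples of local data.

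Next I would invoke Theorem~\ref{thm:typesexistence}: there is a $k_0$ (take the maximum of the finitely many $k_0$'s arising from the finitely many tuples) such that for every $k \ge k_0$ and every such tuple, there exists a non-CM newform $f \in S_k(\Gamma_0(N))$ realizing it, i.e.\ with $\tilde\pi_{f,q} \simeq \tilde\tau_q$ and Atkin-Lehner eigenvalue $\epsilon_q$ at each $q$. The key remaining point is that two newforms realizing two \emph{distinct} tuples cannot lie in the same Galois orbit. This is where I would use that the list of local invariants separates orbits: by Lemma~\ref{lemma:galoisinvariance} the Atkin-Lehner sign at each $q$ is constant on a Galois orbit, and by Theorem~\ref{thm:LTbound} the set of local types at $q$ appearing among the Galois conjugates of $f$ is exactly one local type Galois orbit. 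Hence the assignment $f \mapsto (\tilde\pi_{f,q}, \lambda_q(f))_{q\mid N}$ — recording the local type \emph{orbit} and the sign — is well defined on Galois orbits and injective on the set of orbits we have constructed; here one should note the subtlety that we use the \emph{minimal} Atkin-Lehner sign so that the sign is genuinely an invariant even when $f$ is not minimal up to twist (Definition~\ref{defi:minimalAL}), but since at level exactly $q^{v_q(N)}$ the chosen $\epsilon_q$ is by construction compatible, this causes no conflict. Therefore we obtain at least $\prod_{q\mid N}\LO(q^{v_q(N)})$ distinct non-CM Galois orbits, which is the claimed bound.

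The main obstacle — and the reason the hypothesis on $N$ (prime power or square-free) is needed — is precisely the injectivity step, namely that distinct tuples of local-orbit data really do give distinct global orbits. A priori a single automorphism $\sigma \in \Aut_\Q(\C)$ conjugates the local types at \emph{all} primes simultaneously, so two tuples that differ at several primes could conceivably be conjugate if the coefficient field lacks enough automorphisms to move one prime's type while fixing another's. Under our hypothesis this cannot happen: when $N$ is a prime power there is only one prime to worry about and Theorem~\ref{thm:LTbound} handles it outright; when $N$ is square-free the local types at the (distinct) primes $q \mid N$ are all Steinberg, each a singleton orbit carrying only an Atkin-Lehner sign, and Lemma~\ref{lemma:galoisinvariance} shows each such sign is individually Galois-invariant, so no simultaneous-conjugation issue arises. (For general $N$ this fails, as flagged in Remark~\ref{remark:localtypegeneralN}, which is exactly why the statement is restricted.) Once this is in hand, the proof is a short bookkeeping argument, so I would keep it to one paragraph citing the three theorems above.
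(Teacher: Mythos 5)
Your core argument is correct and is essentially the paper's proof: cite Theorem~\ref{thm:typesexistence} to realize each of the $\prod_q \LO(q^{v_q(N)})$ tuples of compatible local data by a non-CM newform of weight $k\geq k_0$, and cite Lemma~\ref{lemma:galoisinvariance} and Theorem~\ref{thm:LTbound} to show the assignment $f \mapsto (\text{local type Galois orbit at }q, \lambda_q)_{q\mid N}$ is constant on Galois orbits, so distinct tuples give distinct orbits.

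However, your final paragraph misattributes the role of the hypothesis that $N$ is a prime power or square-free. That hypothesis is used \emph{only} because Theorem~\ref{thm:typesexistence} is proved under it; it is not needed for the injectivity step. Your own earlier argument shows injectivity unconditionally: since the invariant you record at each prime is the local-type \emph{Galois orbit} rather than a chosen representative, a global conjugation $\sigma$ sends the local type at $q$ to another member of the same orbit, so the tuple of orbits cannot change. The ``simultaneous conjugation'' phenomenon you raise (illustrated in the paper by $N=11^2\cdot 31^2$) goes in the opposite direction: a scarcity of automorphisms in the coefficient field can make a \emph{single} tuple of local-orbit data split into \emph{several} global Galois orbits, which threatens the sharpness (equality) in~(\ref{eq:Maeda}), not the lower bound. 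Indeed the paper states explicitly that if Theorem~\ref{thm:typesexistence} holds for general $N$ then~(\ref{eq:mainformula}) holds for general $N$, ``since the restriction on $N$ is only used in such result,'' and Remark~\ref{remark:localtypegeneralN} is a remark about that existence result. So the parenthetical claim that for general $N$ injectivity ``fails, as flagged in Remark~\ref{remark:localtypegeneralN}'' is incorrect, even though it does not invalidate your proof under the stated hypothesis.
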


\begin{proof}
  By Theorem \ref{thm:typesexistence} we know that there exists $k_0$
  such that for $k \ge k_0$ and for each local type with a compatible
  A-L sign, a modular form $f$ of weight $k$ and level $N$ exists with
  the specified local type and Atkin-Lehner
  eigenvalue. Theorem~\ref{thm:LTbound} implies that Galois conjugate
  local types appear in the same Galois orbit of $f$, which gives the
  desired inequality.
\end{proof}

\begin{remark}
  If Theorem~\ref{thm:typesexistence} holds in general as explained in
  Remark~\ref{remark:localtypegeneralN}, then for any positive integer
  $N$ we get the inequality
  \begin{equation}
    \label{eq:generalN}
\prod_{p \mid N} \LO(p^{\val_p(N)})\le \NCM(N,k),        
  \end{equation}
for $k$ large enough.
\end{remark}
A natural question is to study how sharp is the inequality in
(\ref{eq:generalN}) for general $N$.  It is not true that the first
inequality is an equality in general!  The reason is that when $N$ is
a prime power (or a prime power times a square-free integer), there
are enough automorphisms in the coefficient field to conjugate each of
the local types so as to get the whole local Galois orbit for each of
them. The problem arises when the automorphisms needed for two
different primes correspond to the same extension (see
Lemmas~\ref{lemma:coeffppalseries} and
\ref{lemma:coeffsupercuspidal}). Here is a concrete example: suppose
that $N=11^2 \cdot 31^2$. Let $\tau_{11}$ be a principal series
corresponding to an order $5$ character, and $\tau_{31}$ be a
principal series of an order $5$ character as well. Let
$f \in S_k(\Gamma_0(11^2 \cdot 31^2))$ be a newform with the chosen
local types at $11$ and $31$. Lemma~\ref{lemma:coeffppalseries}
implies that $\Q(\xi_5)^+$ is contained in the coefficient field of
$f$, so conjugating we can fix a local type at $11$ in the orbit. Once
we fixed such type at $11$, we cannot conjugate the type at $31$
(globally), so we get $2$ different types at $31$ in the Galois orbit
of $f$. In this case, using Theorem~\ref{thm:typesexistence} we get
$2$ as a lower bound for $\NCM(11^2 \cdot 31^2,k)$ (for $k$ large
enough) instead of $1$.

With this example in mind, and the techniques developed before, one
can give a better but more involved lower bound formula for the number
of Galois orbits of modular forms of general level $N$ and large
enough weight $k$, assuming that Theorem~\ref{thm:typesexistence}
holds in general. However, in many instances (for example when $N$ has
a unique prime whose square divides it, or if it holds that whenever
$p^r \mid N$ and $q^s \mid N$, $\gcd((p-1)p,(q-1)q) = 1$) the product
of local Galois types is the best possible bound with our method. This
is precisely the case for the data gathered in
\cite{Tsaknias2012a}.

\medskip

Another natural question is the existence of other Galois orbit
invariants. Based on numerical computations done by the third author
(see \cite{Tsaknias2012a}) it seems that the answer should be
negative, hence we propose the following problem.

\begin{question}
  If $N$ is a prime power or square-free, is (\ref{eq:Maeda}) an
  equality? I.e. is it true that that for $k$ large enough the number
  of Galois orbits of modular forms of level $N$ equals the number of
  Galois conjugate local types with compatible Atkin-Lehner signs?
\label{conecture:maeda}
\end{question}

\begin{remark}
\label{remark:maeda}
  Due to the existence result (Theorem~\ref{remark:localtypegeneralN})
  an affirmative answer to Question~\ref{conecture:maeda} is
  equivalent to a uniqueness result (for $k$ large enough) for the
  Galois orbits of newforms with given Galois conjugate local type and
  compatible Atkin-Lehner signs.
\end{remark}

Clearly such statement is in the spirit of Maeda's original
conjecture, hence it seems natural to expect that if there is no
reason for forms to be non-conjugate, then they should be
conjugate. We do not claim that we are convinced on the veracity of
the problem, but we want to stress that numerical experiments suggest
that the answer might be positive (see \cite{Tsaknias2012a}) as the
values of $\LO(p^n)$ seem to match the number of orbits of non-CM newforms in the
respective space of modular forms of weight $k$ starting at very small
values of $k$.

However, for $p=2$ and $n \ge 8$ even, there is a discrepancy
that we cannot explain. 

\begin{example}
\label{example:discrepancy}
  Let $N=2^8$ and $k=12$. The space $S_{12}(\Gamma_0(256))$ contains
  $17$ Galois orbits. Five of them corresponds to CM forms (four with
  rational coefficients, and one whose coefficient field is
  quadratic). The remaining $12$ orbits have dimensions:
  $2, 2, 4, 4, 6, 6, 8, 8, 8, 10, 10, 12$. Computing a few Hecke
  operators, it can be checked:
  \begin{itemize}
  \item The $2$-dimensional ones are twists of each other
  (via $\chi_{-1}$) and that each orbit is stable under twisting by
  $\chi_{-2}$. It corresponds to the unramified supercuspidal
  representations. 
\item The $4$-dimensional orbits are stable under twisting by
  $\chi_{-1}$ hence are induced from $E=\Q_2(\sqrt{-1})$ and are twist
  of each other by $\chi_2$.
\item The same is true for the $6$-dimensional ones. 
\item Two of the $8$-dimensional ones have Galois orbits invariant
  under twisting by $\chi_{-2}$. They are induced from the unramified
  quadratic extension.
\item The other $8$-dimensional one is induced from $\Q_2(\sqrt{3})$.
\item The two $10$-dimensional ones are principal series at $2$.
\item The $12$ dimensional one is also induced from $\Q_2(\sqrt{3})$.
  \end{itemize}
  Note that we obtain four Galois orbits of newforms from the field
  $\Q_2(\sqrt{-1})$, while we expect only two of them. This phenomenon
  seems to persist for higher weights. The value $\NCM(2^8)$ seems to
  be $12$ (we have computed up to weight 28), while our lower bound equals $10$.
\end{example}

It would
be interesting to have some statistical data on the size of the
smallest $k$ for equality to hold (which in particular is related to
an effective proof of Theorem \ref{thm:typesexistence}).

Note that a suitable variant of Question~\ref{conecture:maeda} makes
sense for general $N$. Giving a more involved formula (as the example
explained for level $N=11^2 \cdot 31^2$) obtained by a detailed study
of the local types for primes dividing $N$ (and the coefficient fields
of such modular forms) one can ask whether the obtained inequality is
best possible. The cases not covered by Theorem~\ref{cor:NCMlowbound}
involve very large levels, so we could not gather any computational
data which might suggest a positive or negative answer for the
generalized Maeda's problem on general levels $N$.

\section{Possible generalizations}

There are many similar situations to study. The first natural question
is what happens when working with modular forms with non-trivial
Nebentypus. The situation is more subtle, and there are two different
problems to be considered. One is that we are forced to look at
minimal twists (and we only considered minimal quadratic twists in the
trivial Nebentypus situation). The second one, is that there are no
Atkin-Lehner involutions! One needs to replace them by the operators
defined by Atkin and Li in \cite{Atkin-Li}. We will consider this
situation in a sequel of the present article. There is an obstacle in
studying the number of orbits of modular forms with Nebentypus coming
from its computational complexity. Still, it is true in this situation
that the number of CM modular forms is bounded in the weight.

A second reasonable generalization is to study the case of Hilbert
modular forms, i.e. changing the base field $\Q$ by a totally real one
$F$. To study CM modular forms, the same ideas in \cite{Tsaknias2012a}
give a bound of their number independent of their weight. The same
techniques developed in this article can be used to compute the number
of local types of level $\id{p}^n$, for $\id{p}$ a prime ideal. Still,
the formula is more involved in each case, as it depends on the degree
$[F:\Q]$, on the inertial degree of $\id{p}$ over $\id{p} \cap \Z$ and
its ramification degree. Then there are other invariants appearing
related to the class number of $F$. An interesting question to study
is if there are other type of Galois invariants besides the ones
described in this article and the ones coming from the class group (it
also seems natural, in the same way that CM forms were treated
separately in the present article, to treat separately special cases
of Hilbert modular forms such as those coming from base change, or
from base change up to twist, from a smaller field). The toy example
should be that of a real quadratic field, where base change forms are
easy to handle by the results of the present article.

At last, it is natural to consider a similar question for other
algebraic reductive groups $G$ over $\Q$ to see if there are more
invariants than those appearing for $\GL_2$.  For example if $G$ is
the group obtained from a rational quaternion algebra ramified at an
even number of finite places, by the Jacquet-Langlands correspondence,
automorphic forms for $G$ correspond to (some particular) automorphic
forms on $\GL_2$. In particular, all the results of this article work
for such algebraic groups, and we do not expect new invariants for
such groups (as we do not expect them for $\GL_2$).  As suggested to
the first author by M. Harris, it would also be interesting to test
other groups like $\GL_n(\A_\Q)$ (or over a totally real number field)
or $\GSp_n(\A_\Q)$ to see if these phenomena persist.  Again, in such a
context it seems natural to exclude all ``special'' forms (i.e., those
coming from automorphic forms from a smaller reductive group via
Langlands functoriality) before checking if there is uniqueness for
orbits with given local constraints, for sufficiently large weight
(existence results are known in this generality, as was mentioned in
section 4).  We must admit that we did not consider any of these
problems from a theoretical point of view nor gathered any
computational evidence but it is our hope that this article may spark
some research interest towards this direction.

\subsection{Applications of Question~\ref{conecture:maeda}}
It is well known that Maeda's conjecture has many applications to
different problems in number theory. The veracity of
Problem~\ref{conecture:maeda} has as many applications as the original
conjecture. Let us recall some of them.

\subsubsection{Inner twists} The truth of Question
\ref{conecture:maeda} implies that the existence of inner twists
for a newform is a purely local property, depending on the local
types of the form.

\begin{prop}
  Assume that Question~\ref{conecture:maeda} has an affirmative answer. Let
  $f \in S_{k}(\Gamma_0(p^r))$ be a newform of prime power level whose
  local type and Atkin-Lehner sign equals $(\tau, \epsilon)$. Let
  $\mu$ be an inner twist of $f$ (i.e. a finite order character such
  that $f \otimes \mu$ is Galois conjugate to $f$). Then for any
  $k'\geq k_0$ (where $k_0$ is the weight after which the conjecture
  becomes effective) and any newform $g \in S_{k'}(\Gamma_0(p^r))$
  whose local type equals $\tau$ and whose Atkin-Lehner sign equals
  $\epsilon$, $\mu$ is an inner twist of $g$.

  Furthermore, if $\mu$ is any finite order character ramified only at
  one prime $p$, for any pair $(\tau, \epsilon)$ as before invariant
  (up to Galois conjugation) under twisting by $\mu$ and every 
  $k'\geq k_0$, all newforms $g \in S_{k'}(\Gamma_0(p^r))$ with local
  data $(\tau,\epsilon)$ have inner twist given by $\mu$.
\end{prop}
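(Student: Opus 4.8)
The plan is to deduce the first assertion from the ``furthermore'' part, so I will concentrate on the latter. The key technical ingredient is a refinement of Theorem~\ref{thm:AL}: for a \emph{quadratic} finite order character $\mu$ ramified only at $p$ and a newform $g$ of trivial Nebentypus such that $g\otimes\mu$ is again a newform of trivial Nebentypus ramified at $p$, the Atkin-Lehner sign of $g\otimes\mu$ at $p$ depends only on $\mu$ and on the local data $(\tilde\pi_{g,p},\lambda_p(g))$ of $g$ at $p$. Since $\mu$ is quadratic, hence fixed by $\Gal(\C/\Q)$, this is equivalent to saying that twisting by $\mu$ descends to a well-defined self-map $T_\mu$ of the set of pairs (local type Galois orbit, compatible Atkin-Lehner sign). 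I would prove this exactly as Theorem~\ref{thm:AL} is proved: one expands $\varepsilon(\tfrac12,\pi_{g,p}\otimes\mu_p,\psi)$ using Deligne's local constant formalism, namely the formula~\eqref{eq:localsign} together with the twisting properties of \cite{Deligne1973}, and checks case by case along Theorem~\ref{thm:characterization} that the restriction to the unit group of the relevant inducing character and its value on a uniformizer---hence the local root number---are functions of $(\tilde\pi_{g,p},\lambda_p(g))$ and $\mu$; for the principal series and Steinberg types one may instead cite the explicit formulas of \cite{Atkin1970,Atkin-Li}.

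Granting the existence of $T_\mu$, here is the ``furthermore'' part. First, the hypothesis that $(\tau,\epsilon)$ be invariant under twisting by $\mu$ already forces $\mu$ to be quadratic: for $\tilde\pi$ in the orbit $\tau$ one has $\det(\tilde\pi\otimes\mu_p)=\det(\tilde\pi)\cdot\mu_p^2$ on the inertia subgroup, and the invariance of $\tau$ makes $\tilde\pi\otimes\mu_p$ Galois conjugate to $\tilde\pi$, so $\mu_p^2$ is Galois conjugate on inertia to the trivial character $\det(\tilde\pi)$, hence trivial; as $\mu$ is unramified away from $p$ this gives $\mu^2=1$. Now fix $k'\ge k_0$ and a newform $g\in S_{k'}(\Gamma_0(p^r))$ with local data $(\tau,\epsilon)$ at $p$ (such $g$ exist by Theorem~\ref{thm:typesexistence}, and we may take it non-CM, which is the setting of Question~\ref{conecture:maeda}). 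Then $g\otimes\mu$ is again a newform in $S_{k'}(\Gamma_0(p^r))$: it is unramified at every $\ell\ne p$ because $g$ has level $p^r$ and $\mu$ is unramified there, its Nebentypus is $\mu^2=1$, and its inertial type at $p$ is $\tilde\pi_{g,p}\otimes\mu_p$, which lies in the orbit $\tau$ since $T_\mu$ fixes $\tau$, so---Galois conjugate inertial types having the same conductor---the conductor at $p$ is again $r$. By the definition of $T_\mu$ the local data of $g\otimes\mu$ at $p$ equals $T_\mu(\tau,\epsilon)=(\tau,\epsilon)$. Hence $g$ and $g\otimes\mu$ are two newforms of weight $k'\ge k_0$ and level $p^r$ with the same local type Galois orbit and the same Atkin-Lehner sign at $p$; assuming Question~\ref{conecture:maeda} in the uniqueness form of Remark~\ref{remark:maeda}, they lie in a single Galois orbit, that is, $g\otimes\mu$ is Galois conjugate to $g$. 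This is precisely the statement that $\mu$ is an inner twist of $g$.

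The first assertion is now a special case. Suppose $\mu$ is an inner twist of $f\in S_k(\Gamma_0(p^r))$ with local data $(\tau,\epsilon)$. Then $f\otimes\mu$ is Galois conjugate to $f$, hence has level $p^r$; comparing $f$ and $f\otimes\mu$ at the primes $\ell\ne p$, where both are unramified, shows $\mu$ is unramified outside $p$ and, as above, quadratic. Galois conjugation preserves the local type Galois orbit and, by Lemma~\ref{lemma:galoisinvariance}, the Atkin-Lehner sign, so $f\otimes\mu$ has local data $(\tau,\epsilon)$ as well; by the definition of $T_\mu$ this says $T_\mu(\tau,\epsilon)=(\tau,\epsilon)$. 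Thus $(\mu,(\tau,\epsilon))$ satisfies the hypotheses of the ``furthermore'' part, which yields the desired conclusion for every $g\in S_{k'}(\Gamma_0(p^r))$ with local data $(\tau,\epsilon)$ and every $k'\ge k_0$. (If one reads $\epsilon$ as the \emph{minimal} Atkin-Lehner sign of Definition~\ref{defi:minimalAL}, the first paragraph becomes unnecessary: twisting by $\mu$ merely permutes the family $T_f$ of quadratic twists, so the minimal sign is literally unchanged.)

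The step I expect to be the main obstacle is the refinement of Theorem~\ref{thm:AL} isolated in the first paragraph. Theorem~\ref{thm:AL} as stated only analyses the behaviour of the Atkin-Lehner sign under the \emph{unramified} quadratic twist, whereas here $\mu$ is genuinely ramified at $p$, so one has to rerun the root-number bookkeeping of its proof---Swan conductors, the conductor of $\psi\circ\Trace$, values on uniformizers, and the $p=2$ sporadic cases---for an arbitrary ramified quadratic $\mu$. This is routine but somewhat lengthy; once it is in place, the remainder is a direct application of Question~\ref{conecture:maeda} together with Lemma~\ref{lemma:galoisinvariance}.
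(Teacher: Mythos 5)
Your proposal follows essentially the same route as the paper's own proof: derive from the inner twist of $f$ that $\mu$ is ramified only at $p$ (and quadratic) and that the local data $(\tau,\epsilon)$ is invariant under twisting by $\mu$, transfer this invariance to any $g$ with the same local data, and invoke the uniqueness reformulation of Question~\ref{conecture:maeda} (Remark~\ref{remark:maeda}) to conclude $g\otimes\mu$ lies in the Galois orbit of $g$, with existence (Theorem~\ref{thm:typesexistence}) handling the ``furthermore'' part. The transfer step you isolate as the well-definedness of $T_\mu$ is precisely what the paper asserts without comment (``the same is true for any $g$ with the same local data''), so making it explicit --- even while leaving its root-number verification sketched, and noting that your parenthetical minimal-sign shortcut does not cover the case where no quadratic twist lowers the level --- does not change the approach.
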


\begin{proof}
  The proof is automatic due to the uniqueness result implied by
  Question~\ref{conecture:maeda} (see Remark \ref{remark:maeda}). If
  we assume that $f$ has an inner twist by $\mu$ this implies that
  $\mu$ ramifies only at $p$ and that the local type and Atkin-Lehner
  signs of $f$ are invariant (up to Galois conjugation) under twisting
  by $\mu$. Therefore the same is true for any $g$ with the same local
  data. Since Maeda's conjecture implies uniqueness of the Galois
  orbit with a fixed local data (at prime power level and weight
  greater or equal to $k_0$) the twist $g \otimes \mu$ must lie in the
  same orbit as $g$. The last claim follows from the same argument with the existence result given by Theorem~\ref{thm:typesexistence}.
\end{proof}

\subsubsection{Base Change}
The proof of (non-solvable) Base Change for classical modular forms
and other cases of Langlands functoriality given
in \cite{Dieulefait2012} relies on the construction of a ``safe'' chain
of congruences linking arbitrary pairs of modular Galois
representations. For the construction of such a chain in the
aforementioned article, it is crucial to “pass through” a space of
newforms having a unique Galois orbit: the space used in loc. cit. is
a space of forms of prime level with non-trivial Nebentypus of fixed
order and relatively large weight, a space that was computed in order
to check that it contains indeed a unique Galois orbit of
newforms. The conjecture proposed in Question~\ref{conecture:maeda} (i.e., the truth of
the claim stated therein), gives an alternative and more theoretical
way to complete the proof of Base Change (a proof not requiring
computations): in fact, for the construction of the “safe chain”
instead of a space with a unique Galois orbit (which is an option, but
requires non-trivial Nebentypus) it is enough to know that in certain
spaces of newforms of sufficiently large weight (and prime power
level) there is a unique orbit with a specific supercuspidal local
inertial type at the prime in the level, a fact that is implied by our
conjecture.

This strategy for the construction of safe chains is explained in
\cite{Rayuela}, in the more general context of Hilbert modular
forms over a given totally real number field $F$. The construction of
a safe chain connecting the Galois representations attached to any
pair of Hilbert newforms over $F$, from whose existence relative
non-solvable Base Change would follow immediately, can be reduced
following the strategy described in loc. cit. to a case where the two
Hilbert newforms have the same level, the same (large) parallel
weight, and common inertial types at primes in their common level,
thus a suitable generalization to Hilbert modular forms of the
uniqueness claim proposed in Question~\ref{conecture:maeda} gives a
way of completing the safe chain described in loc. cit. thus
completing the proof of relative Base Change.

\bibliography{biblio}
\bibliographystyle{alpha}

\end{document}